\newtheorem{theorem}{Theorem}[section]
\newtheorem{lemma}[theorem]{Lemma}
\newtheorem{corollary}[theorem]{Corollary}
\newtheorem{definition}[theorem]{Definition}
\newenvironment{proof}{
    \noindent {\it Proof.}}{\hfill$\Box$
}
\DeclareMathSymbol{\N}{\mathbin}{AMSb}{"4E}
\DeclareMathSymbol{\Z}{\mathbin}{AMSb}{"5A}
\DeclareMathSymbol{\R}{\mathbin}{AMSb}{"52}
\begin{document}

\title{Well-posedness of Wasserstein Gradient Flow Solutions of Higher Order Evolution Equations}
\author{ Ehsan Kamalinejad \\
\small Department of Mathematics,
\small University of Toronto, \\
\small {\tt ehsan@math.toronto.edu}\\
}
\maketitle

\begin{abstract}

A relaxed notion of displacement convexity is defined and used to establish short time existence and uniqueness of Wasserstein gradient flows for higher order energy functionals. As an application, local and global well-posedness of different higher order degenerate non-linear evolution equations are derived. Examples include the thin-film equation and the quantum drift diffusion equation in one spatial variable.\\

\medskip\noindent{\bf Keywords:} optimal transport; Wasserstein gradient flows; displacement convexity; minimizing movement; well-posedness;  thin-film equation; higher order non-linear degenerate equations.

\end{abstract}

AMS subject classification: 35A15, 35K30, 76A20, 35K65.


\section{Introduction}

In the last decade, the theory of gradient flows in the Wasserstein space has been a rapidly expanding area of research. With a wide range of applications to evolution equations and functional inequalities, this theory has received an extensive amount of interest. In this section we start by recalling some historical backgrounds of the theory and then we state the summary of our result. For a comprehensive discussion of all aspects of the theory we refer the reader to monographs \cite{AmbrosioBook} and \cite{Villani}.\\


\subsection{Historical background}

The Wasserstein space $\mathcal{P}_2(\R^m)$ consists of the Borel probability measures on $\R^m$ with finite second moment. The quadratic optimal transport distance, also known as the {\bf Wasserstein distance} $W_2$, defines a distance function between any pair of measures $\mu,\nu \in \mathcal{P}_2(\R^m)$ given by
\begin{equation}\label{def:Wasserstein metric}
W_2(\mu,\nu):=\inf_{\gamma}\left\lbrace \int_{\R^m \times \R^m} |x-y|^2 \, d\gamma~:~ \gamma \in \Gamma(\mu,\nu) \right\rbrace^{\frac{1}{2}}
\end{equation}
where $\Gamma(\mu,\nu)\subset \mathcal{P}_2(\R^m \times \R^m)$ is the space of probability measures with marginals $\mu$ and $\nu$. We will refer to such measures $\gamma$ as {\bf transport plans}.\\

It turns out that $\mathcal{P}_2(\R^m)$ has a rich geometric structure and a formal Riemannian calculus can be performed on this space. The first appearance of the Riemannian calculus on $\mathcal{P}_2(\R^m)$ is due to Otto et al in \cite{JKO} and \cite{OttoPorous}. It was shown in \cite{OttoPorous} that the solution of the porous medium equation $\partial_t u =\Delta u^m$ can be reformulated as the gradient flow of the energy $E(u)=\int\frac{u^{m+1}}{m+1}$ on the Wasserstein space. Since then, the interaction between the Riemannian space $\mathcal{P}_2(\R^m)$ as a geometric object and evolution equations as analytic objects have attracted a lot of attention. This point of view is commonly called \emph{"Otto calculus"}.\\

A notion which has been very important in the developement of this theory is the notion of \emph{displacement convexity}. McCann in his thesis \cite{McCann-Thesis} introduced the notion of displacement convexity of an energy functional on the Wasserstein space. Under the displacement convexity assumption, he proved existence and uniqueness of minimizers of wide classes of energies, commonly referred to as potential, internal, and interactive energies. Displacement convexity had been defined before the development of the Wasserstein gradient flows, but after establishment of the Riemannian structure of the Wasserstein space, it turned out that displacement convexity can be interpreted as the standard convexity along the geodesics of the Wasserstein space. The displacement convexity condition, with its generalization to $\lambda$-displacement convexity, has a central role in existence, uniqueness, and long-time behaviour of the gradient flow of an energy functional.\\

Another important notion in the theory of Wasserstein gradient flows is the notion of \emph{minimizing movement}. Many of the rigorous proofs of the Wasserstein gradient flows are based on the method of minimizing movement. The minimizing movement scheme was suggested by De Georgi as a variational approximation of gradient flows in general metric spaces \cite{DeGiorgiMM}. It was later used by Jordan, Kinderlehrer, Otto \cite{JKO} and by Ambrosio, Savare, Gigli \cite{AmbrosioBook} to construct a systematic rigorous theory of Wasserstein gradient flows. This theory was soon used by many researchers to develop existence, uniqueness, stability, long time behaviour, and numerical approximation of evolution PDEs such as in \cite{AmbrosioZamboti}, \cite{WassersteinDiffusion}, \cite{Carrillo}, \cite{GangboCarlen}, \cite{Mccann02}, \cite{Numerics02}, \cite{Numerical}, and \cite{StickyParticles}.\\


\subsection{Summary of the results and outline of the paper}

In recent years, it has become apparent that Otto calculus also applies to higher-order evolution equations, at least on a formal level. The best-studied example is the thin-film equation $\partial_t u =- \nabla \cdot (u \nabla \Delta u)$, which corresponds to the gradient flow of the Dirichlet energy $E(u)=\frac{1}{2}\int |\nabla u|^2\, dx$. The hope is that gradient flow methods might help to resolve long-standing problems concerning well-posedness and long-time behaviour of this PDE. However, taking advantage of the gradient flow method has proved difficult. The main obstruction has been the lack of displacement convexity of the Dirichlet energy. The same problem arises for studying other energy functionals containing derivatives of the density. In \cite[open~problem~5.17]{VillaniTopics} Villani raised the question whether there is any useful example of a displacement convex functional that contains derivatives of the density. In \cite{Slepcev}, Carrillo and Slep{\v{c}}ev answered this question by providing a class of displacement convex functionals. Therefore it was proved that there is no fundamental obstruction for existence of such energies. However because of the lack of displacement convexity, the Wasserstein gradient flow method has not been very successful in studying gradient flows of the Dirichlet energy and other interesting energies of higher order.\\

Our result can be summarized as follows:
\begin{itemize}
\item We introduce a relaxed notion of $\lambda$-displacement convexity of an energy functional and in Theorem \ref{theorem1} we prove that, under this relaxed assumption, the general theory of well-posedness of Wasserstein gradient flows holds at least locally.
\item In Theorem \ref{the:convexity}, we prove that the Dirichlet energy, which is not $\lambda$-displacement convex in the standard sense, satisfies the relaxed version of $\lambda$-displacement convexity on positive measures. Hence the gradient flow of the Dirichlet energy is locally well-posed and the solution of the thin-film equation with positive initial data exists and is unique as long as positivity is preserved.
\item We show that the method developed to study thin-film equation applies to a range of PDEs of higher order and different forms.
\end{itemize}

The paper is organized as follows. After recalling the backgrounds of the theory, in Section 2.2 we define the new version of $\lambda$-displacement convexity which we call \emph{restricted $\lambda$-convexity}. Setting minor technicalities aside, the idea of restricted $\lambda$-convexity can be summarized in two simple principles: Firstly, the modulus of convexity, $\lambda$, can vary along the flow. Secondly, one can study $\lambda$-convexity \emph{locally on sub-level sets of the energy}. Note that the local analysis of gradient flows most likely fails without the help of energy dissipation. For example, the Dirichlet energy is not even locally $\lambda$-convex, because an arbitrarily small neighbourhood of a smooth positive measure contains measures with infinite energy where $\lambda$-convexity fails altogether. Instead, by taking advantage of the defining properties of the gradient flow, we study the flow on energy sub-level sets. The key observation is that typically finiteness of the energy implies some regularity on the measure which helps to elevate the formal calculations to rigorous proofs. For example, in 1-D, densities of finite Dirichlet energy lie in $H^1$.\\

After defining restricted $\lambda$-convexity, we state our first result, Theorem \ref{theorem1}. In this theorem we prove that if an energy functional is restricted $\lambda$-convex at a point $\mu$, then the corresponding gradient flow trajectory starting from $\mu$ exists and is unique at least for a short time. The proof is based on convergence of the minimizing movement scheme and the subdiffrential property that is carried over to the limiting curve. It is interesting that both of the constraints ''locality'' and ''energy boundedness'' are already encoded in the definition of the minimizing movement scheme (\ref{GMM}).\\

In Section 3 we apply the theory developed in the previous section to the Dirichlet energy. We prove that the Dirichlet energy on $\R/\Z$, is restricted $\lambda$-convex on the measures with positive density. This theorem re-derives the existing theory \cite{Bertozzi} of well-posedness of positive solutions of the thin-film equation by a direct geometric proof. To the best of our knowledge, this is the first well-posedness result for the thin-film equation based on Wasserstein gradient flows. Two key ideas are very useful in the proofs of this section: Firstly, the Wasserstein convergence and the uniform convergence are equivalent on energy sub-level sets. Secondly, finiteness of the energy can be used directly in the calculations of the second derivative of the energy along geodesics.\\

In the final section, we show that the method developed in Sections 2 and 3 can be applied to a wide class of energies of different forms and of higher orders. Some important examples have been studied using this method such as equations of higher order of the form $\partial_t u = (-1)^{k}\partial_x(u \partial_x^{2k+1}u)$, and equations of different forms, for instance the quantum drift diffusion equation $\partial_t u =-\partial_x (u \partial_x \frac{\partial_x^2 \sqrt{u}}{\sqrt{u}})$.\\

The Wasserstein gradient flow approach to PDEs has some interesting features. For example, it has a unified notion of solution which allows for very weak solutions and it is applicable to equations of higher order even with the lack of maximum principle. Also the minimizing movement scheme is a constructive method. Hence the proofs are constructive and one can derive numerical approximations based on the Wasserstein gradient flows similar to what has been done in \cite{Numerics02} and \cite{Numerical}.\\


\section{Well-posedness of the gradient flow}

In this section, we study the well-posedness problem of gradient flows on the Wasserstein space. Informally stated, a gradient flow evolves  by the steepest descent of an energy functional. This idea can be formalized in several different ways, some of which carry over to general metric spaces. Here, we consider the Fr\'echet subdifferential formulation of gradient flows. We will identify conditions on the energy functional that guarantee short-time existence and uniqueness.  The proof is based on a careful analysis of the minimizing movement scheme.\\

Let us recall the notion of a gradient flow on a finite dimensional Riemannian manifold. The ingredients of a gradient flow consist of three parts: a smooth manifold $M$, a metric $g$, and an energy $E$. Then the gradient flow of the energy $E$ can be formulated as
\begin{equation*}
\begin{cases}
\partial_t x_t = V_t &\text{(velocity vector)} \\
V_t = -\nabla E(x_t)  &\text{(steepest descent)}.
\end{cases}
\end{equation*} 
Note that the role of the metric is to convert the co-vector $dE=\frac{dE}{dt} dx$ into the corresponding vector $\nabla E$ on the tangent space.\\
  
In the case of Wasserstein gradient flows, the ingredients are given by: $\mathcal{P}_2(\R^m)$ as the manifold, the Wasserstein distance (and its infinitesimal version) as the metric, and an energy functional as the energy. The formulation of a Wasserstein gradient flow is given in (\ref{equ:flow}) and as it can be seen, it is similar to its finite dimensional counterpart.\\


\subsection{Geometry of the Wasserstein space}

In this part we gather some basic elements of the Riemannian structure of the Wasserstein space $\mathcal{P}_2(\R^m)$. Here, we work at a formal level  and we refer the reader to \cite{AmbrosioBook} or \cite{VillaniTopics} for rigorous proofs. It is worth mentioning that we consider the Euclidean space $\R^m$ as the underlying space of probability measures but one can replace $\R^m$ with any Hilbert space by slight modifications to the definitions as it is done in \cite{AmbrosioBook}.\\

Consider the Wasserstein distance (\ref{def:Wasserstein metric}). The Brenier-McCann theorem \cite{BrenierPolar} asserts that the minimum is always assumed and the minimal transport plan is concentrated on a graph of a map $T_{\mu}^{\nu}:\mathbb{R}^m \longrightarrow \mathbb{R}^m$, provided that $\mu \in \mathcal{P}_2^a(\R^m)$ where $\mathcal{P}_2^a(\R^m)$ is the set of absolutely continuous probability measures with respect to the Lebesgue measure. In this case, we have $\nu=(T_{\mu}^{\nu})_{\#}\mu$, and one can rewrite the Wasserstein distance as
\begin{equation}\label{W2distance}
W_2(\mu,\nu)=\left( \int_{\R^m} \left| T_{\mu}^{\nu}-Id \right|^2  d\mu \right)^{\frac{1}{2}}.
\end{equation}
Assuming $\mu=udx$ and $\nu=vdx$ are absolutely continuous measures, one can use the change of measure formula, given by the \textbf{Monge–Ampère} equation, to write an explicit relation between the densities $u$ and $v$ in terms of the optimal map:
\begin{equation}\label{Monge}
v(T_{\mu}^{\nu}(x))=\frac{u(x)}{det(DT_{\mu}^{\nu})(x)}.
\end{equation} 
The optimal transport map also defines the \textbf{geodesic $\mu_s$} between two measures $\mu_0$ and $\mu_1$ given by the pushforward of the linear interpolation between the optimal map $T_{\mu_0}^{\mu_1}$ and the identity map: 
\begin{equation}\label{eq:geodesic}
\mu_s=\left( (1-s)Id + sT_{\mu_0}^{\mu_1} \right)_{\#}\mu_0.
\end{equation}

The appropriate class of curves inside $\mathcal{P}_2(\R^m)$ which have a natural notion of tangent to them turns out to be the class of absolutely continuous curves $AC_{loc}^2([0,\infty); \mathcal{P}_2(\R^m))$.\\
A curve $\mu_t$ belongs to $AC_{loc}^2([0,\infty); \mathcal{P}_2(\R^m))$ if there exist a locally $L^2(dt)$ integrable function $g$ such that
\[ W_2(\mu_a,\mu_b) \leqslant \int_a^b g(t) dt ~~~\forall a,b \in [0,\infty). \]
The absolutely continuous curves are given by mass conservative deformations of the measures i.e. they satisfy the \textbf{continuity equation}:
\begin{equation*}
\partial_t\mu_t+\nabla.(\mu_t V_t)=0
\end{equation*}
for a \textbf{velocity vector field} $V_t$ of deformations of $\mu_t$. This equation is assumed to hold in the distributional sense. In \cite{Benamou} Brenier and Benamou showed that $\mathcal{P}_2(\R^m)$ is a length space in the sense that the distance of two measures is given by the length of the shortest path between them:
\begin{equation}\label{v01}
W_2(\mu,\nu)=\inf \left\lbrace \int_0^1 ( \int_{\R^m}|V_t|^2d\mu_t )^{\frac{1}{2}} dt ~~\text{s.t.}~~ \partial_t\mu_t + \nabla . (\mu_t V_t) =0; ~ \mu_0=\mu,~\mu_1=\nu \right\rbrace
\end{equation}
where the infimum is taken over all curves in $AC^2([0,1]; \mathcal{P}_2(\R^m))$. For a given curve $\mu_t \in AC^2([0,1]); \mathcal{P}_2^a(\R^m))$ there might be many velocity vectors that satisfy the same continuity equation $\partial_t \mu_t+\nabla .(\mu_t V_t)=0$. For instance vector fields of the form $F_t+V_t$ where $\nabla .(\mu_t F_t)=0$ all satisfy the same  continuity equation. However there is a unique vector field that minimizes (\ref{v01}), i.e. the one which defines the distance between $\mu$ and $\nu$. This optimal velocity vector field is defined to be the \textbf{tangent vector field} to the curve $\mu_t$. The tangent vector field of $\mu_t$ can also be expressed in term of the optimal maps. If $V_t$ is the tangent vector field of $\mu_t$ then 
\begin{equation*}
V_t=\lim_{\epsilon \to 0} \dfrac{T_{\mu_t}^{\mu_{t+\epsilon}}-Id}{\epsilon}.
\end{equation*}
The converse is also true, i.e. for a given optimal map $T_{\mu}^{\nu}$, the vector field $T_{\mu}^{\nu}-Id$ is a tangent vector at $\mu$ for some curve that passes $\mu$. The tangent vector fields are also useful in calculating the derivative of the Wasserstein metric along curves. Let $\mu_t \in AC_{loc}^2(\R^+;\mathcal{P}_2^a(\R^m))$. By \cite[Chapter 8]{AmbrosioBook} \textbf{the derivative of the Wasserstein metric} along the curve $\mu_t$ is given by
\begin{equation}\label{derivative of metric}
\dfrac{d}{dt}W_2(\mu_t,\nu)^2= 2 \int_{\R^m}\langle V_t, Id - T_{\mu_t}^{\nu} \rangle d\mu_t ~~~~ \forall \nu \in \mathcal{P}_2(\R^m)
\end{equation}
where $V_t$ is the tangent vector field to $\mu_t$ and $\langle.,.\rangle$ is the standard inner product on  $\R^m$.\\

The Wasserstein metric is closely related to a certain weak topology on ${\cal P}_2(\R^m)$, induced by \textbf{narrow convergence}:
\begin{equation}\label{narrow convergence}
\mu_n \xrightarrow{\text{narrow}} \mu ~~\Longleftrightarrow ~~ \int_{\R^m} fd\mu_n \rightarrow \int_{\R^m} fd\mu ~~~~\forall f\in C_b^0(\R^m)\,,
\end{equation} 
where $C_b^0(\R^m)$ is the set of of continuous bounded real functions on $\R^m$. The topologies induced by the narrow convergence and the Wasserstein distance are equivalent for sequences of measures with uniformly bounded second moments:
\begin{equation}\label{narrow}
\lim_{n \to \infty} W_2(\mu_n,\mu)=0 ~~\Longleftrightarrow ~~ \begin{cases} \mu_n \xrightarrow{\text{narrow}} \mu\\
\lbrace \mu_n \rbrace ~~\text{has uniformly bounded 2-moments}. \end{cases}\\
\end{equation}


\subsection{Wasserstein gradient flows}

Now we describe gradient flows on the Wasserstein space. Consider the energy functional $E:\mathcal{P}_2(\R^m) \rightarrow [0,\infty]$ and let its domain, $D(E)$, be the set where $E$ is finite. Let $\mu$ lie in $D(E) \cap \mathcal{P}_2^a(\R^m)$. A vector field $\xi \in L^2(d\mu)$ belongs to the {\bf subdifferential} of $E$ at $\mu$ if
\begin{equation}\label{def:subdiff}
\liminf_{\substack{
            \nu \to \mu \\
            \nu \in D(E)}} \dfrac{E(\nu)-E(\mu)- \int_X \left\langle \xi, T_{\mu}^{\nu}-Id \right\rangle \,\mathrm{d}\mu}{W_2\left( \mu,\nu \right)} \geqslant 0.
\end{equation}

We say that a curve $\mu_t \in AC^2_{loc}(\R^+,\mathcal{P}_2^a(\R^m)) $ is a trajectory of the {\bf gradient flow} for the energy $E$, if there exists a velocity field $V_t$ with $|V_t|_{L^2(d\mu_t)}\in L^1_{loc}(\R^+)$ such that 
\begin{equation}\label{equ:flow}
\begin{cases}
\partial_t\mu_t + \nabla \cdot\left( \mu_t V_t \right)=0 
&\text{(continuity equation)}, \\
V_t \in -\partial E(\mu_t)  &\text{(steepest descent)}
\end{cases}
\end{equation}
hold for almost every $t>0$. We will refer to (\ref{equ:flow}) as the {\bf gradient flow equation}. The continuity equation, which is assume to hold in the distributional sense, links the curve with its velocity vector field and ensures that the mass is conserved. The steepest descent equation expresses that the gradient flow evolves in the direction of maximal energy dissipation.\\

Next we describe the link between Wasserstein gradient flows and evolution PDEs. Let $\mu=udx$ be in $D(E)$ and let $V\in \partial E(\mu)$ be a tangent vector field at $\mu$. Consider a linear perturbation of $\mu$ given by the curve $\mu_{\epsilon}:=(Id+\epsilon W)_{\#}\mu$ for small values of $\epsilon>0$ where $W\in C^{\infty}_c(\R^m;\R^m)$. By the subdifferential inequality (\ref{def:subdiff}) we have
\begin{equation*}
\limsup_{\epsilon \uparrow 0} \dfrac{E(u_{\epsilon})-E(u)}{\epsilon}\leqslant \int_{\R^m} \langle V,W \rangle udx
\leqslant \liminf_{\epsilon \downarrow 0} \dfrac{E(u_{\epsilon})-E(u)}{\epsilon}.
\end{equation*}
On the other hand, assuming $C^2$ regularity on $u_{\epsilon}$ and using standard first order variations we have
\begin{equation*}
\lim_{\epsilon \to 0} \dfrac{E(u_{\epsilon})-E(u)}{\epsilon}=\int_{\R^m} (\dfrac{\delta E(u)}{\delta u}) (\frac{\partial u_{\epsilon}}{\partial \epsilon})udx
\end{equation*}  
where $\dfrac{\delta E(u)}{\delta u}$ stands for standard first variations of $E$. Therefore 
\begin{equation*}
\int_{\R^m} \langle V,W \rangle udx = \int_{\R^m} \dfrac{\delta E(u)}{\delta u} \partial_{\epsilon} u_{\epsilon} dx.
\end{equation*}
The continuity equation for the curve $u_{\epsilon}$ implies that $\partial_{\epsilon} u_{\epsilon} =  - \nabla.(uW)$. Hence
\begin{equation*}
\int_{\R^m} \langle V,W \rangle udx = -\int_{\R^m} \lbrace \dfrac{\delta E(u)}{\delta u} \nabla .(uW) \rbrace dx.
\end{equation*}
Integrating by parts, we have
\begin{equation*}
\int_{\R^m} \langle V,W \rangle udx = \int_{\R^m} \langle \nabla (\dfrac{\delta E(u)}{\delta u}), W \rangle udx.
\end{equation*}
Since $W$ is arbitrary we have
\begin{equation}\label{v04}
V(x) =\nabla (\dfrac{\delta E(u)}{\delta u})(x)~~~\text{for $\mu$-a.e. $x$}.
\end{equation}
Now assume that a curve $\mu_t=u_tdx$ satisfies the gradient flow equation (\ref{equ:flow}). Steepest descent equation and (\ref{v04}) imply
\begin{equation*}
V_t(x) =- \nabla (\dfrac{\delta E(u)}{\delta u})(x).
\end{equation*}
By plugging $V_t$ into continuity equation, we have
\begin{equation} \label{v05}
\partial_t u = \nabla . \left(u \nabla (\dfrac{\delta E(u)}{\delta u}) \right).
\end{equation}
This is the corresponding PDE for the gradient flow of the energy $E$. For example in the case of the Dirichlet energy $E(u)=\int_{\R^m}|\nabla u|^2 dx$, the first variation is given by $\frac{\delta E(u)}{\delta u}=-\Delta u$. Therefore the corresponding PDE is the \textbf{thin-film equation}:
\begin{equation*}
\partial_t u = - \nabla .(u \nabla \Delta u).
\end{equation*}

Our proofs are based on the \textbf{minimizing movement scheme} as a discrete-time approximation of a gradient flow which is described here. Let $\mu_0 \in D(E)$, and fix the step size $\tau>0$. Recursively define a sequence $ \left\lbrace M_{\tau}^n \right\rbrace_{n=1}^{+\infty} $ by setting $M_0^\tau=\mu_0$, and for $n\geqslant 1$,
\begin{equation}\label{GMM}
M_{n}^{\tau} = \underset{\mu \in D(E)}{\text{\rm argmin}} 
\left\lbrace E (\mu) + \dfrac{1}{2\tau} W_2^2 \left( M_{n-1}^{\tau},\mu \right) \right\rbrace.
\end{equation}
The formal Euler-Lagrange equation for this minimization problem is given by 
\begin{equation*}
\ U_n^{\tau}\in -\partial E \left( M_n^{\tau} \right)
\end{equation*}
where $U_n^{\tau}=-\frac{T_{M^\tau_{n}}^{M^\tau_{n-1}}-Id}{\tau}$. Next we define a piecewise constant curve and a corresponding velocity field by
$$\mu^\tau_t:= M_n^{\tau}\,,\qquad 
V^{\tau}_t:=U_n^{\tau}\,,\qquad \mbox{for}\ (n-1)\tau <t\le n\tau \,.
$$
We have
\begin{equation}\label{sub of piecwise}
V^\tau_t \in -\partial E(\mu^\tau_t ) ~~~~ \forall t>0.
\end{equation}
This equation suggests that $\mu_t^{\tau}$ is an approximation of the gradient flow trajectory of $E$ starting from $\mu_0$.\\

There is a standard set of hypothesises that we assume throughout this section. We gather the hypothesises here:
\begin{itemize}
\item $E$ is nonnegative, and its sub-level sets are locally compact in the Wasserstein space.
\item $E$ is lower semicontinuous under narrow convergence.
\item $D(E) \subseteq \mathcal{P}_2^a(\R^m)$. 
\end{itemize}

The first two conditions guarantee existence and convergence of minimizing movement scheme (\ref{GMM}). The third condition ensures that measures of finite energy are absolutely continuous, allowing us to use transport maps for studying the Wasserstein distance which simplifies the calculations, and allow us to view the subdifferential as a tangent vector. Note that these conditions can be sharpened as in \cite{AmbrosioBook}, but to make the presentation more apparent, we prefer to work in this more concrete setting.\\

The main condition that guarantees existence and uniqueness of a Wasserstein gradient flow is given by the displacement convexity condition which asks for convexity of the energy along geodesics of the Wasserstein space. Let $\mu_s:[0,1]\longrightarrow\mathcal{P}_2(\R^m)$ be the geodesic between $\mu_0,\mu_1 \in \mathcal{P}_2(X)$. An energy functional $E$ is called \textbf{displacement convex} along $\mu_s$ if
\begin{equation*}
E(\mu_s)\leqslant (1-s)E(\mu_0)+sE(\mu_1)~~~~s \in [0,1].
\end{equation*}
More generally the energy is called \textbf{$\lambda$-displacement convex} or in short $\lambda$-convex if the convexity is bounded from below by the constant $\lambda$, i.e.
\begin{equation}\label{def:convx along curve}
E(\mu_s)\leqslant (1-s)E(\mu_0)+sE(\mu_1)-\dfrac{\lambda}{2}s(1-s)W_2^2(\mu_0,\mu_1)~~~~s \in [0,1].
\end{equation}
Furthermore, assuming that $E(\mu_s)$ is smooth as a function of $s$, one can write a derivative version of $\lambda$-convexity. In this case, $E$ is $\lambda$-convex along $\mu_s$ if
\begin{equation}\label{def:smooth conv along curve}
\dfrac{d^2}{ds^2}E(\mu_s)\geqslant \lambda W_2^2(\mu_0,\mu_1).
\end{equation}
An energy is called $\lambda$-convex if it satisfies (\ref{def:convx along curve}) along all geodesics of $\mathcal{P}_2(\R^m)$.\\


\subsection{Restricted $\lambda$-convexity and local well-posedness}

\begin{definition}[Restricted $\lambda$-convexity.]\label{def:restricted}
We say that an energy $E$ is restricted $\lambda$-convex at $\mu \in D(E)$ with $E(\mu)<c<+\infty$, if $\exists \delta>0$, such that $E$ is $\lambda$-convex along the geodesics connecting any pair of measures $\nu_1,\nu_2 \in B_{\delta}(\mu)\cap E_c$, where $B_{\delta}(\mu)=\left\lbrace \nu ~~s.t.~~ W_2(\mu,\nu)<\delta \right\rbrace$ and $E_c= \left\lbrace \nu ~~s.t.~~ E(\nu)<c \right\rbrace $.
\end{definition}

\begin{figure}[htb]
\centering
\includegraphics[width=0.4 \textwidth]{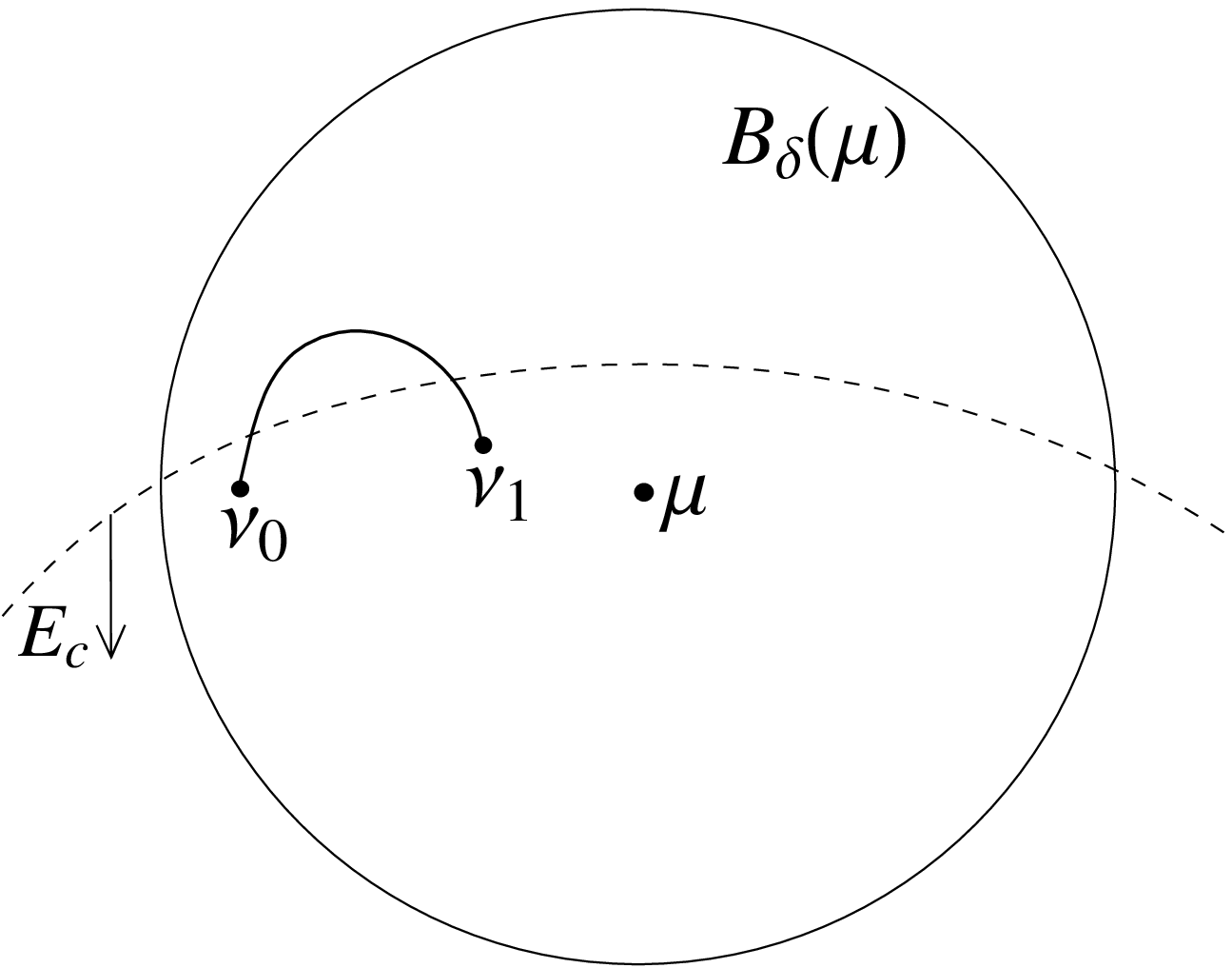}
\caption{Restricted $\lambda$-convexity}
\label{fig:Restricted}
\end{figure}


The following lemma has a key role in the arguments of Theorem \ref{theorem1}. It shows how one can use restricted $\lambda$-convexity assumption to study the subdifferential of an energy.

\begin{lemma}[Subdifferential and restricted $\lambda$-convexity.]\label{lem:subdif}
Assume that $E$ is restricted $\lambda$-convex at $\mu$. Then a vector field $\xi \in L^2(d\mu)$ belongs to the subdifferential of $E$ at $\mu$ if and only if 
\begin{equation}\label{equ:lambdasubdif}
E(\nu)-E(\mu)\geqslant \int_X \langle \xi, T_{\mu}^{\nu} -Id \rangle \,\mathrm{d}\mu + \dfrac{\lambda}{2}W_2^2(\mu,\nu) ~~~~\forall \nu \in B_{\delta}(\mu) \cap E_{c}
\end{equation}
where $B_{\delta}(\mu) \cap E_c$ is the corresponding restricted $\lambda$-convexity domain at $\mu$. 
\end{lemma}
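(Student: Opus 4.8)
The plan is to prove the two directions separately, with the "if" direction being essentially immediate and the "only if" direction carrying all the weight. For the "if" direction: suppose $\xi$ satisfies the inequality (\ref{equ:lambdasubdif}) for all $\nu \in B_\delta(\mu)\cap E_c$. Then for such $\nu$ we have
\[
\frac{E(\nu)-E(\mu)-\int_X\langle\xi,T_\mu^\nu-Id\rangle\,d\mu}{W_2(\mu,\nu)}\geqslant \frac{\lambda}{2}W_2(\mu,\nu),
\]
and since the sub-level set condition $\nu\in E_c$ is automatically satisfied for $\nu$ narrowly (hence $W_2$-) close to $\mu$ once we note $E(\mu)<c$ — here one must invoke lower semicontinuity of $E$ plus the observation that the $\liminf$ in (\ref{def:subdiff}) can be restricted to $\nu$ with $E(\nu)<c$, since the complement contributes nothing below $c$... actually more carefully, one should argue that the $\liminf$ defining the subdifferential only "sees" nearby measures, and among nearby measures in $D(E)$ with bounded moments, those with $E(\nu)\geqslant c$ can be discarded when computing the relevant $\liminf\geqslant 0$ — we conclude the right-hand side tends to $0$ as $\nu\to\mu$, so the $\liminf$ is $\geqslant 0$ and $\xi\in\partial E(\mu)$.

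For the "only if" direction, the idea is the standard convexity-along-geodesics trick. Suppose $\xi\in\partial E(\mu)$ and fix $\nu\in B_\delta(\mu)\cap E_c$. Let $\mu_s=((1-s)Id+sT_\mu^\nu)_\#\mu$ be the geodesic from $\mu$ to $\nu$; for small $s$ each $\mu_s$ lies in $B_\delta(\mu)\cap E_c$ (continuity of $s\mapsto W_2(\mu,\mu_s)=sW_2(\mu,\nu)$ handles the ball, and $\lambda$-convexity along this geodesic together with $E(\mu),E(\nu)<c$ gives $E(\mu_s)<c$ for all $s\in[0,1]$ — this is exactly where restricted $\lambda$-convexity is used, and it shows the whole geodesic, not just a piece, stays in the domain). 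Restricted $\lambda$-convexity then gives
\[
E(\mu_s)\leqslant (1-s)E(\mu)+sE(\nu)-\frac{\lambda}{2}s(1-s)W_2^2(\mu,\nu).
\]
Rearranging,
\[
\frac{E(\mu_s)-E(\mu)}{s}\leqslant E(\nu)-E(\mu)-\frac{\lambda}{2}(1-s)W_2^2(\mu,\nu).
\]
Now I want to take $s\downarrow 0$ on the left and recognize the limit as $\int_X\langle\xi,T_\mu^\nu-Id\rangle\,d\mu$. The subdifferential inequality (\ref{def:subdiff}), applied along the particular sequence $\nu=\mu_s$, gives
\[
\liminf_{s\downarrow 0}\frac{E(\mu_s)-E(\mu)-\int_X\langle\xi,T_\mu^{\mu_s}-Id\rangle\,d\mu}{W_2(\mu,\mu_s)}\geqslant 0,
\]
and since $T_\mu^{\mu_s}-Id=s(T_\mu^\nu-Id)$ (the optimal map from $\mu$ to a point on its own geodesic is the corresponding fraction of the map to the endpoint) and $W_2(\mu,\mu_s)=sW_2(\mu,\nu)$, this reads $\liminf_{s\downarrow 0}\big(E(\mu_s)-E(\mu)-s\int\langle\xi,T_\mu^\nu-Id\rangle d\mu\big)/(sW_2(\mu,\nu))\geqslant 0$, i.e. $\liminf_{s\downarrow 0}\big((E(\mu_s)-E(\mu))/s\big)\geqslant \int_X\langle\xi,T_\mu^\nu-Id\rangle\,d\mu$. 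Combining with the rearranged convexity inequality and letting $s\downarrow 0$ there,
\[
\int_X\langle\xi,T_\mu^\nu-Id\rangle\,d\mu\leqslant E(\nu)-E(\mu)-\frac{\lambda}{2}W_2^2(\mu,\nu),
\]
which is (\ref{equ:lambdasubdif}).

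The main obstacle, I expect, is the bookkeeping around the constrained domain $B_\delta(\mu)\cap E_c$: one must check that the geodesic $\mu_s$ genuinely stays inside this set for all $s\in[0,1]$ (so that $\lambda$-convexity is actually applicable along it), and one must be careful that the $\liminf$ in the definition of the subdifferential, which ranges over all $\nu\to\mu$ in $D(E)$, can be legitimately restricted to (or evaluated along) the geodesic sequence and to measures of energy below $c$. The identity $T_\mu^{\mu_s}=(1-s)Id+sT_\mu^\nu$ for points on a geodesic, while standard (it follows from McCann's characterization of geodesics and uniqueness of optimal maps from an absolutely continuous measure), should be stated explicitly since the whole argument hinges on it. Everything else is elementary manipulation of the convexity inequality.
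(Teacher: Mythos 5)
Your proposal is correct and follows essentially the same route as the paper: both directions hinge on (i) discarding measures with $E(\nu)\geqslant c$ from the subdifferential $\liminf$ because the difference quotient for such $\nu$ tends to $+\infty$ as $W_2(\mu,\nu)\to 0$, and (ii) the geodesic rearrangement of the $\lambda$-convexity inequality combined with $T_{\mu}^{\mu_s}=Id+s(T_{\mu}^{\nu}-Id)$. The only place the paper is more explicit is point (i), where it spells out the blow-up estimate you gesture at; also note that your claim $E(\mu_s)<c$ along the whole geodesic is not needed (and can fail for $\lambda<0$) --- finiteness of $E(\mu_s)$, which the convexity inequality does give, suffices to evaluate the subdifferential $\liminf$ along $\mu_s$.
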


\begin{proof}
First we claim that for studying the subdifferential of the functional, it is enough to consider the restricted domain $B_{\delta}(\mu) \cap E_{c}$. Let $\xi \in L^2(d\mu)$, we have to show that
\begin{equation}\label{equ:subdif}
\liminf_{\substack{
            \nu \to \mu \\
            \nu \in D(E)}} \dfrac{E(\nu)-E(\mu)-\int_X \langle \xi, T_{\mu}^{\nu} -Id  \rangle \,\mathrm{d}\mu}{W_2(\mu,\nu)} \geqslant 0.
\end{equation}
if and only if
\begin{equation}\label{equ:ressubdif}
\liminf_{\substack{
            \nu \to \mu \\
            \nu \in B_{\delta}(\mu) \cap E_{c}}}\dfrac{E(\nu)-E(\mu)-\int_X \langle \xi, T_{\mu}^{\nu} -Id  \rangle \,\mathrm{d}\mu}{W_2(\mu,\nu)} \geqslant 0.
\end{equation}
$\Downarrow$ is trivial.\\
For $\Uparrow$ assume that $\lbrace \nu_n \rbrace_1^{\infty}$ is a minimizing sequence for (\ref{equ:subdif}).\\
In the case that $\liminf_{n \to \infty} E(\nu_n)-E(\mu) > 0$ we have
\small
\begin{equation*}
\begin{aligned}
\dfrac{E(\nu_n)-E(\mu)-\int_X \langle \xi, T_{\mu}^{\nu_n} -Id  \rangle \,\mathrm{d}\mu}{W_2(\mu,\nu_n)} &\geqslant  \dfrac{E(\nu_n)-E(\mu)-\left( \int_X \left| \xi \right|^2 \,\mathrm{d}\mu \right)^{1/2} \left( \int_X \right| T_{\mu}^{\nu_n} -Id \left|^2 \,\mathrm{d}\mu \right)^{1/2}}{W_2(\mu,\nu_n)}\\
&=\dfrac{E(\nu)-E(\mu)}{W_2(\mu,\nu_n)} -\left( \int_X \left| \xi \right|^2 \,\mathrm{d}\mu \right)^{1/2}\dfrac{\left( \int_X \right| T_{\mu}^{\nu_n} -Id \left|^2 \,\mathrm{d}\mu \right)^{1/2}}{W_2(\mu,\nu_n)}\\
&=\left[ \dfrac{E(\nu)-E(\mu)}{W_2(\mu,\nu_n)} - \left| \xi \right|_{L_{\mu}^2}\right] \longrightarrow +\infty.
\end{aligned}
\end{equation*} 
\normalsize
Therefore inequality (\ref{equ:subdif}) is automatically true if $\liminf_{n \to \infty} E(\nu_n)-E(\mu) > 0$. Hence one only needs to consider sequences $\lbrace \nu_n \rbrace_1^{\infty}$ such that $\lim_{n \to \infty} E(\nu_n)-E(\mu) \leqslant 0$. Therefore, for large enough $n$ we have $E(\nu_n) < c$. On the other hand, $\nu_n \xrightarrow []{W_2} \mu$. Hence (\ref{equ:ressubdif}) and (\ref{equ:subdif}) are equivalent.\\

It is clear that (\ref{equ:lambdasubdif}) implies (\ref{equ:ressubdif}). Conversely, let $\xi \in L^2(d\mu)$ satisfy (\ref{equ:ressubdif}). Let $\nu \in B_{\delta}(\mu) \cap E_{c}$. Since $E$ is restricted $\lambda$-convex at $\mu$, we have $\lambda$-convexity of $E$ along the geodesic $\mu_s$ connecting $\mu$ to $\nu$. Therefore
\begin{equation*}
E(\mu_s) \leqslant (1-s) E(\mu) + s E(\nu) -\dfrac{\lambda}{2}s(1-s)W_2^2(\mu,\nu)  ~~~~\forall s\in [0,1] .
\end{equation*}
Dividing by $s$ and reordering, we have
\begin{equation}\label{equ:mus}
\dfrac{E(\mu_s) - E(\mu)}{s}\leqslant E(\nu)-E(\mu)-\dfrac{\lambda}{2}(1-s)W_2^2(\mu,\nu).
\end{equation}
$\xi$ is in the subdifferential of $E$ at $\mu$. Hence
\begin{equation}\label{equ:andq}
\begin{aligned}
\liminf_{s \to 0^+} \dfrac{E(\mu_s)-E(\mu)}{s} &\geqslant \lim_{s \to 0^+} \dfrac{1}{s} \int_{X} \langle \xi, T_{\mu}^{\mu_s}-Id \rangle \,\mathrm{d}\mu \\
&= \int_{X} \langle \xi, T_{\mu}^{\nu} -Id\rangle \,\mathrm{d}\mu
\end{aligned}
\end{equation}
where we used linearity of the interpolate map $T_{\mu}^{\mu_s}=Id+s(T_{\mu}^{\nu}-Id)$. Therefore (\ref{equ:mus}) and (\ref{equ:andq}) imply
\[
E(\nu)-E(\mu)\geqslant \int_X \langle \xi, T_{\mu}^{\nu} -Id \rangle \,\mathrm{d}\mu + \dfrac{\lambda}{2}W_2^2(\mu,\nu).
\]
\end{proof}\\


The following lemma is used in Theorem \ref{theorem1} when we study weak convergence of tangent vector fields. 

\begin{lemma}\label{lemma:convergence}
Let $\mu_t^k, \mu_t \in AC^2([0,\hat{t}];\mathcal{P}_2^a(\mathbb{R}^m))$ and let $V_t^k \in L^2(d\mu_t^k),~ V_t \in L^2(d\mu_t)$. Assume that
\begin{itemize}
\item $\mu_t^k \xrightarrow []{W_2} \mu_t$ uniformly on $[0,\hat{t}]$.
\item  $V_t^k$ \textbf{weakly converges} to  $V_t$ in the sense that $\forall U\in C_b^0([0,\hat{t}] \times \R^m)$, we have
\[ \lim_{k \to \infty} \int_0^{\hat{t}} \int_{\mathbb{R}^m} \left\langle V_t^k , U(t,x) \right\rangle d\mu_t^k dt =\int_0^{\hat{t}} \int_{\mathbb{R}^m} \left\langle V_t , U(t,x) \right\rangle d\mu_t dt. \]
\end{itemize}
Then $\forall \nu \in \mathcal{P}_2(\mathbb{R}^m)$
\begin{equation*}
\lim_{k \to \infty} \int_0^{\hat{t}} \int_{\mathbb{R}^m} \left\langle V_t^k , T_{\mu_t^k}^{\nu}-Id \right\rangle d\mu_t^k dt =\int_0^{\hat{t}} \int_{\mathbb{R}^m} \left\langle V_t , T_{\mu_t}^{\nu}-Id \right\rangle d\mu_t dt.
\end{equation*}
\end{lemma}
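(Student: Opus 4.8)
## Proof Proposal

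The plan is to reduce the claimed convergence to the hypothesized weak convergence against bounded continuous test functions, by showing that the integrands $\langle V_t^k, T_{\mu_t^k}^\nu - Id\rangle$ can be compared to $\langle V_t^k, U(t,x)\rangle$ for a suitable fixed $U$. The fundamental difficulty is that the ``test function'' $T_{\mu_t^k}^\nu - Id$ is itself $k$-dependent and a priori only in $L^2(d\mu_t^k)$, not bounded or continuous; so the hypothesis cannot be applied directly. The remedy is a two-step approximation: first replace the $k$-dependent map $T_{\mu_t^k}^\nu$ by the limiting map $T_{\mu_t}^\nu$, controlling the error by stability of optimal maps under Wasserstein convergence; then replace $T_{\mu_t}^\nu - Id$ by a genuinely bounded continuous truncation, controlling that error by tightness/uniform integrability.

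First I would record the a priori bounds. Since $\mu_t^k \in AC^2$, the velocities $V_t^k$ have $\int_0^{\hat t}\|V_t^k\|_{L^2(d\mu_t^k)}^2\,dt$ controlled (at least along the relevant sequence — this is the metric derivative), and similarly the maps satisfy $\int_{\mathbb{R}^m}|T_{\mu_t^k}^\nu - Id|^2\,d\mu_t^k = W_2^2(\mu_t^k,\nu)$, which is uniformly bounded on $[0,\hat t]$ because $W_2(\mu_t^k,\nu)\le W_2(\mu_t^k,\mu_t)+W_2(\mu_t,\nu)$ and the first term $\to 0$ uniformly while the second is continuous in $t$ hence bounded. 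These bounds will make all the error terms manageable via Cauchy--Schwarz in space and time.

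Next comes the core step: stability of optimal transport maps. Because $\mu_t^k \xrightarrow{W_2}\mu_t$ and $\mu_t$ is absolutely continuous, the optimal maps $T_{\mu_t^k}^\nu$ converge to $T_{\mu_t}^\nu$ — more precisely, the optimal plans $(\,Id, T_{\mu_t^k}^\nu\,)_\#\mu_t^k$ converge narrowly to $(\,Id, T_{\mu_t}^\nu\,)_\#\mu_t$, and one gets convergence of $T_{\mu_t^k}^\nu$ to $T_{\mu_t}^\nu$ in an $L^2$-transport sense (this is the standard stability result, e.g. \cite[Chapter 7]{AmbrosioBook} or \cite{VillaniTopics}). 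I would combine this with the uniform-in-$t$ control to conclude that
\[
\lim_{k\to\infty}\int_0^{\hat t}\int_{\mathbb{R}^m}\Big\langle V_t^k,\ (T_{\mu_t^k}^\nu - Id) - (T_{\mu_t}^\nu - Id)\Big\rangle\,d\mu_t^k\,dt = 0,
\]
by Cauchy--Schwarz, bounding $\|V_t^k\|_{L^2(d\mu_t^k)}$ by its integrable envelope and sending the transport-distance between the two plans to zero (dominated convergence in $t$). Thus it suffices to prove
\[
\lim_{k\to\infty}\int_0^{\hat t}\int_{\mathbb{R}^m}\big\langle V_t^k,\ T_{\mu_t}^\nu - Id\big\rangle\,d\mu_t^k\,dt
=\int_0^{\hat t}\int_{\mathbb{R}^m}\big\langle V_t,\ T_{\mu_t}^\nu - Id\big\rangle\,d\mu_t\,dt .
\]
Here the integrand still involves $T_{\mu_t}^\nu - Id$, which is not bounded; so I would truncate, setting $U_R(t,x) := \chi_R(x)\,(T_{\mu_t}^\nu(x) - x)$ with a smooth cutoff $\chi_R$ supported in $B_R$, and (if needed) mollifying in $x$ so that $U_R \in C_b^0([0,\hat t]\times\mathbb{R}^m)$. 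The hypothesis applies to each $U_R$, giving convergence with $U_R$ in place of $T_{\mu_t}^\nu - Id$ on both sides. The remaining tail errors $\int\!\!\int \langle V_t^k,\ (1-\chi_R)(T_{\mu_t}^\nu - Id)\rangle\,d\mu_t^k\,dt$ and its $\mu_t$-analogue are bounded, via Cauchy--Schwarz, by $\big(\!\int_0^{\hat t}\|V_t^k\|_{L^2(d\mu_t^k)}^2 dt\big)^{1/2}$ times $\big(\!\int_0^{\hat t}\int_{\{|x|>R\}} |T_{\mu_t}^\nu - Id|^2\,d\mu_t^k\,dt\big)^{1/2}$; the latter $\to 0$ as $R\to\infty$ uniformly in $k$ by tightness of $\{\mu_t^k\}$ (a consequence of $W_2$-convergence, hence uniform integrability of the second moments) together with the uniform $L^2$-bound on $T_{\mu_t}^\nu - Id$. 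A diagonal argument in $k$ and $R$ then closes the proof.

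The step I expect to be the main obstacle is the stability of the optimal maps $T_{\mu_t^k}^\nu \to T_{\mu_t}^\nu$ with enough uniformity in $t$ to justify passing the limit inside the time integral — one must be careful that the convergence is not merely pointwise-in-$t$ but dominated, which is where the uniform-in-$t$ bound $W_2(\mu_t^k,\mu_t)\to 0$ and the continuity of $t\mapsto W_2(\mu_t,\nu)$ are essential. The truncation/mollification step and the tail estimates are routine once the a priori $L^2$-in-space-and-time bounds on $V_t^k$ and on $T_{\mu_t^k}^\nu - Id$ are in hand.
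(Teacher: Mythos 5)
Your overall architecture is close to the paper's (uniform $L^2$ bounds, stability of optimal plans, truncation to reach the $C_b^0$ test-function hypothesis, tail control by second moments and Cauchy--Schwarz), but there is a genuine gap in the middle: both your reduction step and your truncation step integrate the limiting optimal map $T_{\mu_t}^\nu$ against the \emph{varying} measures $\mu_t^k$. The map $T_{\mu_t}^\nu$ is only a Borel map defined $\mu_t$-a.e.; it need not be continuous, and need not even be defined on a set of full $\mu_t^k$-measure. So the quantity $\int_0^{\hat t}\int\langle V_t^k,(T_{\mu_t^k}^\nu-Id)-(T_{\mu_t}^\nu-Id)\rangle\,d\mu_t^k\,dt$ is not well-posed as written, and even after an arbitrary Borel extension the error terms you need, such as $\int_0^{\hat t}\int_{\{|x|>R\}}|T_{\mu_t}^\nu-Id|^2\,d\mu_t^k\,dt$, cannot be made small uniformly in $k$ from $L^2(d\mu_t)$ information alone: narrow or Wasserstein convergence only transfers integrals of \emph{continuous} integrands. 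The same difficulty reappears when you write ``(if needed) mollifying in $x$ so that $U_R\in C_b^0$'': mollification changes the function, and the resulting discrepancy must again be integrated against $\mu_t^k$, which is exactly the uncontrolled step. Likewise, the ``$L^2$-transport sense'' stability $T_{\mu_t^k}^\nu\to T_{\mu_t}^\nu$ you invoke is a statement about maps only when the base measure is fixed; when the base measure varies, stability is a statement about plans, not maps.

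The paper closes this gap by never integrating a discontinuous function against $\mu_t^k$ except for $\mu_t^k$'s \emph{own} optimal map. It fixes a single $T_t\in C_c^0([0,\hat t]\times\R^m)$ with $\int_0^{\hat t}\int|T_{\mu_t}^\nu-T_t|^2\,d\mu_t\,dt<\epsilon^2$ and splits the integrand into $\langle V_t^k,Id\rangle$, $\langle V_t^k,T_{\mu_t^k}^\nu-T_t\rangle$, and $\langle V_t^k,T_t\rangle$. The middle term is handled by lifting to the optimal plans $\gamma_t^k=(Id\times T_{\mu_t^k}^\nu)_\#\mu_t^k$: by stability of optimal plans and Brenier--McCann uniqueness (this is where absolute continuity of $\mu_t$ enters), $\gamma_t^k\to\gamma_t$ narrowly with uniformly bounded second moments, hence $\int|T_{\mu_t^k}^\nu-T_t|^2\,d\mu_t^k=\int|y-T_t(x)|^2\,d\gamma_t^k\to\int|T_{\mu_t}^\nu-T_t|^2\,d\mu_t<\epsilon^2$, the integrand $|y-T_t(x)|^2$ now being continuous. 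The $\langle V_t^k,Id\rangle$ term and the tails are then controlled essentially as you propose. One further small point: the uniform bound on $\int_0^{\hat t}\|V_t^k\|^2_{L^2(d\mu_t^k)}\,dt$ should be extracted from the weak-convergence hypothesis via the uniform boundedness principle, not from the metric derivative, since the lemma does not assume $V_t^k$ is the tangent velocity of $\mu_t^k$.
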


\begin{proof}
Since $V_t^k$ is weakly convergent by uniform boundedness principle we have\\
 $~\sup_{k}\int_0^{\hat{t}} \int_{\mathbb{R}^m} |V_t^k|^2 d\mu_t^k dt < +\infty$. Let 
\[M=\sup_k\int_0^{\hat{t}} (\int_{\mathbb{R}^m} \left| V_t^k \right|^2 d\mu_t^k + \int_{\mathbb{R}^m}\left| V_t \right|^2 d\mu_t )dt.\]
Choose $T_t\in C_c^0([0,\hat{t}] \times \R^m)$ such that 
\[\int_0^{\hat{t}} \int_{\mathbb{R}^m} |T_{\mu_t}^{\nu}-T_t|^2 d\mu_t dt <\epsilon^2.\]
We have
\begin{equation*}
\begin{aligned}
&~\left| \int_0^{\hat{t}} \int_{\mathbb{R}^m} \left\langle V_t^k , T_{\mu_t^k}^{\nu}-Id\right\rangle d\mu_t^k dt - \int_0^{\hat{t}} \int_{\mathbb{R}^m} \left\langle V_t , T_{\mu_t}^{\nu}-Id\right\rangle d\mu_t dt \right|\\
&\leqslant \underbrace{\left| \int_0^{\hat{t}} \int_{\mathbb{R}^m} \left\langle V_t^k,Id \right\rangle d\mu_t^k dt-\int_0^{\hat{t}} \int_{\mathbb{R}^m} \left\langle V_t,Id \right\rangle d\mu_t dt \right| }_A\\
&+ \underbrace{\left| \int_0^{\hat{t}} \int_{\mathbb{R}^m} \left\langle V_t^k,T_{\mu_t^k}^{\nu}-T_t \right\rangle d\mu_t^k dt \right|}_B\\
&+ \underbrace{\left| \int_0^{\hat{t}} \int_{\mathbb{R}^m} \left\langle V_t^k,T_t \right\rangle  d\mu_t^k dt - \int_0^{\hat{t}} \int_{\mathbb{R}^m} \left\langle V_t,T_{\mu_t}^{\nu} \right\rangle  d\mu_t dt \right|}_C
\end{aligned}
\end{equation*}

We study each of the items separately.\\

Since $\mu_t^k$ is uniformly converging to $\mu_t$, the second moment of $\mu_t^k$ is uniformly bounded. In particular there is a compact set $S \subset [0,\hat{t}] \times \R^m $ such that
\begin{equation}\label{h01}
(\int_{S^c}|x|^2 \mu_t dt + \sup_{k}\int_{S^c}|x|^2 \mu_t^k dt) < \epsilon^2.
\end{equation}
We have
\begin{equation*}
\begin{aligned}
\lim_{k \to \infty} A &= \lim_{k \to \infty} \left| \int_0^{\hat{t}} \int_{\mathbb{R}^m} \left\langle V_t^k,Id \right\rangle d\mu_t^k dt-\int_0^{\hat{t}} \int_{\mathbb{R}^m} \left\langle V_t,Id \right\rangle d\mu_t dt \right|\\
&\leqslant \lim_{k \to \infty}\left| \int_S  \left\langle V_t^k,Id \right\rangle  d\mu_t^k dt-\int_S \left\langle V_t,Id \right\rangle  d\mu_t dt \right|\\
&+\lim_{k \to \infty} \left| \int_{S^c}  \left\langle V_t^k,Id \right\rangle d\mu_t^k dt-\int_{S^c} \left\langle V_t,Id \right\rangle  d\mu_t dt \right|.
\end{aligned}
\end{equation*}
Because $S$ is compact one can use weak convergence of $V_t^k$ on $S$. Hence the limit of the first term vanishes and we have
\begin{equation*}
\begin{aligned} 
\lim_{k \to \infty} A & \leqslant \lim_{k \to \infty} \left| \int_{S^c}  \left\langle V_t^k,Id \right\rangle  d\mu_t^k dt-\int_{S^c} \left\langle V_t,Id \right\rangle d\mu_t dt \right|\\
&\leqslant \frac{\epsilon}{2} \lim_{k \to \infty} \left( \int_{S^c}  |V_t^k|^2 d\mu_t^k dt + \int_{S^c} |V_t|^2 d\mu_t dt \right)\\
&+\lim_{k \to \infty} \frac{1}{2\epsilon}\left( \int_{S^c}  |x|^2 d\mu_t^k dt + \int_{S^c} |x|^2 d\mu_t dt \right)
\end{aligned}
\end{equation*}
where we used Young's inequality with the constant $\epsilon$. By (\ref{h01}) we have
\begin{equation*}
\lim_{k \to \infty} A \leqslant \frac{\epsilon M}{2} + \frac{\epsilon}{2}. 
\end{equation*}
Since $\epsilon$ is arbitrary we have $\lim_{k\to\infty} A =0$.\\

We now study $B$. Consider the measure $\gamma_t^k$ on $\R^m \times \R^m$ given by
\[\gamma_t^k=(Id\times T_{\mu_t^k}^{\nu})_{\#}\mu_t^k .\]
Recall that the measure $\gamma_t^k$ is the optimal plan with marginals $\mu_t^k$ and $\nu$. Since $\mu_t^k \rightarrow \mu_t$, by the stability of optimal plans \cite[Theorem 5.20]{Villani}, the set of optimal plans between $\mu_t^k$ and $\nu$ is compact in the narrow topology and every limit point is an optimal plan between $\mu_t$ and $\nu$. On the other hand, because $\mu_t$ is an absolutely continuous measure, Brenier-McCann Theorem ensures that the optimal plan between $\mu_t$ and $\nu$ is unique. This implies that the sequence $\gamma_t^k$ converges narrowly for all $t \in [0,\hat{t}]$. Furthermore, the uniform convergence of $\mu_t^k$ implies that $\gamma_t^k$ have uniformly bounded second moment. We have 
\begin{equation}\label{y01}
\gamma_t^k=(Id\times T_{\mu_t^k}^{\nu}))_{\#}\mu_t^k \xrightarrow []{narrow} \gamma_t=(Id\times T_{\mu_t}^{\nu}))_{\#}\mu_t ~~~~ \forall t \in [0,\hat{t}].
\end{equation}
Taking the limit of $B$ yields
\begin{equation*}
\begin{aligned}
\lim_{k \to \infty}B &= \lim_{k \to \infty}\left| \int_0^{\hat{t}} \int_{\mathbb{R}^m} \left\langle V_t^k , T_{\mu_t^k}^{\nu}-T_t \right\rangle d\mu_t^k dt \right|\\
&\leqslant \lim_{k \to \infty} \frac{\epsilon}{2} \int_0^{\hat{t}} \int_{\mathbb{R}^m} | V_t^k |^2  d\mu_t^k dt + \frac{1}{2\epsilon} \int_0^{\hat{t}} \int_{\mathbb{R}^m} | T_{\mu_t^k}^{\nu}-T_t |  d\mu_t^k dt\\
&\leqslant \frac{\epsilon M}{2} + \frac{1}{2\epsilon} \lim_{k \to \infty} \int_0^{\hat{t}} \int_{\mathbb{R}^m} | T_{\mu_t^k}^{\nu}-T_t |^2  d\mu_t^k dt.
\end{aligned}
\end{equation*}
by lifting to the optimal plans $\gamma_t^k=(Id\times T_{\mu_t^k}^{\nu})_{\#}\mu_t^k$, we have
\begin{equation*}
\lim_{k \to \infty}B \leqslant \frac{\epsilon M}{2} + \frac{1}{2\epsilon} \lim_{k \to \infty} \int_0^{\hat{t}} \int_{\mathbb{R}^m \times \mathbb{R}^m} |y-T_t(x)|^2  d\gamma_t^k  dt.
\end{equation*}
Since $\gamma_t^k \to \gamma_t$ point-wise, $\gamma_t^k$ has uniformly bounded second moment, and $|y-T_t(x)|^2$ is dominated by a constant times $|x^2+y^2+1|$, we can use dominated convergence theorem. Therefore 
\begin{equation*}
\begin{aligned}
\lim_{k \to \infty}B & \leqslant \frac{\epsilon M}{2} + \frac{1}{2\epsilon} \lim_{k \to \infty} \int_0^{\hat{t}} \int_{\mathbb{R}^m \times \mathbb{R}^m} |y-T_t(x)|^2  d\gamma_t^k  dt\\
& = \frac{\epsilon M}{2} + \frac{1}{2\epsilon} \int_0^{\hat{t}} \int_{\mathbb{R}^m \times \mathbb{R}^m} |y-T_t(x)|^2  d\gamma_t  dt\\
&=\frac{\epsilon M}{2} + \frac{1}{2\epsilon} \int_0^{\hat{t}} \int_{\mathbb{R}^m} | T_{\mu_t}^{\nu}-T_t |^2 d\mu_t  dt\\
&\leqslant \frac{\epsilon M}{2} + \frac{\epsilon}{2}.
\end{aligned}
\end{equation*}

Finally, we study the last term $C$. We have
\begin{equation*}
\begin{aligned}
\lim_{k \to \infty}C &= \lim_{k \to \infty}\left| \int_0^{\hat{t}} \int_{\mathbb{R}^m} \left\langle V_t^k , T_t \right\rangle  d\mu_t^k dt - \int_0^{\hat{t}} \int_{\mathbb{R}^m} \left\langle V_t , T_{\mu_t}^{\nu} \right\rangle  d\mu_t dt \right|\\
&\leqslant \lim_{k \to \infty} \left| \int_0^{\hat{t}} \int_{\mathbb{R}^m} \left\langle V_t^k , T_t \right\rangle  d\mu_t^k dt - \int_0^{\hat{t}} \int_{\mathbb{R}^m} \left\langle V_t , T_t \right\rangle  d\mu_t dt \right|\\
& + \left| \int_0^{\hat{t}} \int_{\mathbb{R}^m} \left\langle V_t , T_{\mu_t}^{\nu}-T_t \right\rangle  d\mu_t dt \right|.
\end{aligned}
\end{equation*}
Since $T_t \in C_b^0$ we can use weak convergence of $V_t^k$ for the first term. Hence
\begin{equation*}
\begin{aligned}
\lim_{k \to \infty}C & \leqslant \left| \int_0^{\hat{t}} \int_{\mathbb{R}^m} \left\langle V_t , T_{\mu_t}^{\nu}-T_t \right\rangle  d\mu_t dt \right|\\
&\leqslant M\ \int_0^{\hat{t}} \int_{\mathbb{R}^m} |T_{\mu_t}^{\nu}-T_t|^2 d\mu_t dt\\
&\leqslant M \epsilon^2.
\end{aligned}
\end{equation*}
\end{proof}


\begin{theorem}[Existence and uniqueness of the flow]\label{theorem1}
Let $E:\mathcal{P}_2(\mathbb{R}^m) \longrightarrow [0,+\infty]$ be a lower semi continuous energy functional with locally compact sub-level sets and let $D(E)\subseteq \mathcal{P}_2^a(\mathbb{R}^m)$. Assume $E(\mu)< c < +\infty$ and that $E$ is restricted $\lambda$-convex at $\mu$. Then there exist $\hat{t}>0$ and a curve $\mu_t \in AC^2 \left( [0,\hat{t}]; \mathcal{P}_2^a(\mathbb{R}^m) \right)$ such that $\mu_t$ is the unique gradient flow of $E$ starting from $\mu$.
\end{theorem}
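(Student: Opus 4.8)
The plan is to run the minimizing movement scheme (\ref{GMM}) starting from $\mu_0=\mu$, to use energy dissipation to confine the whole discrete trajectory to the restricted $\lambda$-convexity domain of $\mu$ for a short time, and only then pass to the limit. First, for each step size $\tau$ the minimizer $M_n^{\tau}$ in (\ref{GMM}) exists by the direct method: a minimizing sequence has bounded energy and bounded $W_2$-distance to $M_{n-1}^{\tau}$, so it lies in a sub-level set where the standing hypotheses give local compactness, and lower semicontinuity under narrow convergence produces the minimizer. Testing (\ref{GMM}) against the competitor $M_{n-1}^{\tau}$ gives $E(M_n^{\tau})+\tfrac{1}{2\tau}W_2^2(M_{n-1}^{\tau},M_n^{\tau})\le E(M_{n-1}^{\tau})$, hence $E(M_n^{\tau})\le E(\mu)<c$ for every $n$ and, summing, $\sum_{n\ge 1}\tfrac{1}{2\tau}W_2^2(M_{n-1}^{\tau},M_n^{\tau})\le E(\mu)$. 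By Cauchy--Schwarz $W_2(\mu,M_N^{\tau})\le\sqrt{2E(\mu)\,N\tau}$, so choosing $\hat t$ so small that $\sqrt{2E(\mu)\,\hat t}<\delta/4$ forces every iterate with $n\tau\le\hat t$ into $B_{\delta/4}(\mu)\cap E_c$. This localization, which rests on nothing more than the dissipation inequality already built into (\ref{GMM}), is the point that makes the merely local hypothesis of Definition \ref{def:restricted} usable.

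Next I would extract a discrete $\lambda$-subdifferential inclusion. Fix $n$ with $n\tau\le\hat t$ and any $\nu\in B_{\delta/2}(\mu)\cap E_c$; then $M_n^{\tau}$ and $\nu$ both lie in $B_{\delta}(\mu)\cap E_c$, so $E$ is $\lambda$-convex along the geodesic $\sigma_s$ joining them. Comparing $E(\sigma_s)+\tfrac{1}{2\tau}W_2^2(M_{n-1}^{\tau},\sigma_s)$ with its value at $s=0$, using the first-variation formula (\ref{derivative of metric}) for $s\mapsto W_2^2(M_{n-1}^{\tau},\sigma_s)$ together with the linearity $T_{M_n^{\tau}}^{\sigma_s}=Id+s(T_{M_n^{\tau}}^{\nu}-Id)$, and then letting $s\downarrow 0$, one arrives at $E(\nu)-E(M_n^{\tau})\ge\int\langle -U_n^{\tau},\,T_{M_n^{\tau}}^{\nu}-Id\rangle\,dM_n^{\tau}+\tfrac{\lambda}{2}W_2^2(M_n^{\tau},\nu)$ for all such $\nu$. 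By Lemma \ref{lem:subdif} this is precisely $-U_n^{\tau}\in\partial E(M_n^{\tau})$. In terms of the piecewise constant interpolants $\mu_t^{\tau},V_t^{\tau}$ of (\ref{sub of piecwise}) we thus have $-V_t^{\tau}\in\partial E(\mu_t^{\tau})$ for $t\le\hat t$, with every $\mu_t^{\tau}$ confined to $B_{\delta/4}(\mu)\cap E_c$.

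Then I would carry out the compactness and limiting argument. The classical a priori bounds give $W_2(\mu_s^{\tau},\mu_t^{\tau})^2\le 2E(\mu)(|t-s|+\tau)$ and $\int_0^{\hat t}\!\int|V_t^{\tau}|^2\,d\mu_t^{\tau}\,dt\le 2E(\mu)$ uniformly in $\tau$; a refined Arzel\`a--Ascoli based on local compactness of the sub-level sets yields a subsequence $\tau_k\to 0$ with $\mu_t^{\tau_k}\xrightarrow{W_2}\mu_t$ uniformly on $[0,\hat t]$, where $\mu_t\in AC^2([0,\hat t];\mathcal{P}_2^a(\mathbb{R}^m))$, $E(\mu_t)\le E(\mu)<c$ and $W_2(\mu,\mu_t)\le\delta/4$; and, by weak-$*$ compactness of the momentum densities (as in \cite{AmbrosioBook}), a further subsequence along which $V_t^{\tau_k}$ converges weakly to some $V_t$ in the sense of Lemma \ref{lemma:convergence}, with $\int_0^{\hat t}\|V_t\|_{L^2(d\mu_t)}^2\,dt<\infty$ and with the continuity equation preserved in the limit. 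Integrating the discrete inequality over an arbitrary subinterval $[a,b]\subseteq[0,\hat t]$ and sending $k\to\infty$ --- Lemma \ref{lemma:convergence} handles the cross term $\int_a^b\!\int\langle V_t^{\tau_k},T_{\mu_t^{\tau_k}}^{\nu}-Id\rangle\,d\mu_t^{\tau_k}\,dt$, Fatou together with lower semicontinuity of $E$ handle $\int_a^b E(\mu_t^{\tau_k})\,dt$, and uniform $W_2$-convergence handles $\int_a^b W_2^2(\mu_t^{\tau_k},\nu)\,dt$ --- gives $\int_a^b\!\big[E(\nu)-E(\mu_t)-\int\langle -V_t,T_{\mu_t}^{\nu}-Id\rangle\,d\mu_t-\tfrac{\lambda}{2}W_2^2(\mu_t,\nu)\big]\,dt\ge 0$ for every such $[a,b]$ and every $\nu\in B_{\delta/2}(\mu)\cap E_c$. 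By the Lebesgue differentiation theorem the integrand is nonnegative for a.e.\ $t$, so (\ref{equ:lambdasubdif}) holds with $\xi=-V_t$ for a.e.\ $t$ and all $\nu$ in the restricted-convexity domain of $\mu_t$ (which is available since $\mu_t\in B_{\delta/4}(\mu)$); Lemma \ref{lem:subdif} applied at $\mu_t$ then gives $V_t\in-\partial E(\mu_t)$ a.e., and combined with the continuity equation this says that $(\mu_t,V_t)$ solves the gradient flow equation (\ref{equ:flow}) on $[0,\hat t]$.

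For uniqueness, note that any gradient flow from $\mu$ satisfies the same energy-dissipation inequality, hence is energy non-increasing and stays $\sqrt{E(\mu)t}$-close to $\mu$, so on a short interval it too lies in the restricted $\lambda$-convexity domain of $\mu$; if $\mu_t$ and $\tilde\mu_t$ are two such solutions, the two-curve version of (\ref{derivative of metric}) for $t\mapsto W_2^2(\mu_t,\tilde\mu_t)$ combined with the inequality (\ref{equ:lambdasubdif}) of Lemma \ref{lem:subdif} (used at $\mu_t$ with competitor $\tilde\mu_t$ and symmetrically) yields $\tfrac{d}{dt}W_2^2(\mu_t,\tilde\mu_t)\le -2\lambda\,W_2^2(\mu_t,\tilde\mu_t)$, and Gr\"onwall with $W_2(\mu_0,\tilde\mu_0)=0$ forces $\mu_t\equiv\tilde\mu_t$; taking $\hat t$ to be the smallest of the thresholds above gives the theorem. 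I expect the main obstacle to be exactly this localization bookkeeping --- tracking explicit radii so that the discrete scheme, the limit curve, and every competing gradient flow all remain where restricted $\lambda$-convexity is available --- together with the care needed so that the time null set in the limiting subdifferential inequality does not depend on $\nu$; once these are handled, what remains is the classical minimizing-movement machinery, which is now applicable thanks to Lemmas \ref{lem:subdif} and \ref{lemma:convergence}.
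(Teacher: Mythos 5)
Your proposal is correct and follows essentially the same route as the paper: the minimizing movement scheme, confinement of the trajectory to the restricted $\lambda$-convexity domain using energy dissipation, Lemma \ref{lem:subdif} to upgrade the discrete Euler--Lagrange inclusion to the $\lambda$-subdifferential inequality, Lemma \ref{lemma:convergence} to pass to the limit, and the doubling/Gr\"onwall argument for uniqueness. The only notable variation is that you confine the discrete iterates directly via the telescoped dissipation estimate $W_2(\mu,M_N^{\tau})\le\sqrt{2E(\mu)N\tau}$ before passing to the limit, whereas the paper first extracts the limit curve on an arbitrary interval, chooses $\hat t$ from its continuity, and then pushes the confinement back to the iterates by uniform convergence --- a cosmetic difference in bookkeeping, not a different argument.
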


\begin{proof}
Let $\mu_t^k:=\mu_t^{\tau}$ be a piecewise constant solution to the minimizing movement scheme (\ref{GMM}) with $\tau=\frac{1}{k}$. The minimizing movement sequence is designed in a way that it converges to a limiting curve in a very general setting. In \cite[Theorem 11.1.6]{AmbrosioBook} it has been proved that, under very weak assumptions which hold here, the minimizing movement scheme converges sub-sequentially to a limiting curve such that (after relabelling) $\forall a>0$
\begin{itemize}
\item $\mu_t^k\xrightarrow []{W_2} \mu_t \in AC^2 \left( [0,a]; \mathcal{P}_2(\R^m) \right)$ uniformly in $[0,a]$.
\item The sequence $\lbrace V_t^k\rbrace$ of the velocity tangent vectors to $\lbrace \mu_t^k \rbrace$ converges weakly to $V_t \in L^2(d\mu)$ in $\R^m \times (0,T)$.
\item The continuity equation $\partial_t \mu_t + \nabla . \left( \mu_t V_t \right)=0$ holds for the limiting curve.
\end{itemize}
We need to prove that the limiting curve $\mu_t$ satisfies the steepest descent equation and that it is unique. Since $\mu_t$ is a continuous curve, we can find $\hat{t}$ such that $\mu_t \in B_{\delta/4}(\mu)$ for all $t \in [0,\hat{t}]$ where $\delta$ is the radius of restricted $\lambda$-convexity at $\mu$. We have
\begin{equation}\label{A}
E(\mu_t)\leqslant \liminf_{k \to \infty} E(\mu_t^k) \leqslant E(\mu) < c. 
\end{equation}
The first inequality follows from lower semi continuity of the energy and the second inequality follows from the structure of the minimizing movement scheme (\ref{GMM}). Hence 
\begin{equation}\label{B}
\mu_t \in B_{\delta/4}(\mu) \cap E_c ~~~ \forall t \in [0,\hat{t}].
\end{equation}
Since $\mu_t^k\xrightarrow []{W_2} \mu_t$ uniformly in $[0,\hat{t}]$, we can find $K\in \mathbb{N}$ such that $W_2(\mu_t,\mu_t^k)<\delta/4$, $\forall k\geqslant K$ and $\forall t \in [0,\hat{t}]$. Without loss of generality we assume that $K=1$. Therefore (\ref{A}) and (\ref{B}) imply
\begin{equation}\label{D}
\mu_t^k\in B_{\delta/2}(\mu) \cap E_c.
\end{equation}
The Euler-Lagrange equation of minimizing movement (\ref{sub of piecwise}) implies that $-V_t^k\in \partial E(\mu_t^k)$. Therefore using (\ref{D}) and the variational formulation of the subdifferential (Lemma \ref{lem:subdif}) we have
\begin{equation}\label{ine:Mk}
E(\nu)-E(\mu_t^k) \geqslant \int_{\R^m} \left\langle -V_t^k,T_{\mu_t^k}^{\nu} -Id \right\rangle d\mu_t^k + \dfrac{\lambda}{2} W_2^2(\mu_t^k,\nu)
\end{equation}
for all $\nu \in  B_{\delta/2}(\mu) \cap E_c$. By construction, $B_{\delta/4}(\mu_t) \cap E_c \subseteq B_{\delta/2}(\mu_t^k) \cap E_c$. Therefore (\ref{ine:Mk}) holds for all $\nu$ in $B_{\delta/4}(\mu_t) \cap E_c$. By integrating (\ref{ine:Mk}) over $t$ and against a text function $\psi \in C_c^{\infty}((0,\hat{t});[0,\infty))$ we have
\begin{small}
\begin{equation}\label{m01}
\int_0^{\hat{t}}E(\nu) \psi(t) dt -\int_0^{\hat{t}}E(\mu_t^k) \psi(t) dt \geqslant \int_0^{\hat{t}}\int_{\R^m}\left\langle -V_t^k,T_{\mu_t^k}^{\nu} -Id \right\rangle \psi(t) d\mu_t^k dt + \dfrac{\lambda}{2} \int_0^{\hat{t}} W_2^2(\mu_t^k,\nu)\psi(t)dt
\end{equation}
\end{small}
for all $\nu$ in $B_{\delta/4}(\mu_t) \cap E_c$.

We take the limit of (\ref{m01}) as $k \to \infty$. By the lower semi-continuity of $E$
\begin{equation*}
\int_0^{\hat{t}}E(\nu)\psi(t)dt -\int_0^{\hat{t}}E(\mu_t)\psi(t)dt \geqslant \int_0^{\hat{t}}E(\nu)\psi(t)dt - \liminf_{k\to\infty}\int_0^{\hat{t}}E(\mu_t^k)\psi(t)dt.
\end{equation*}
Lemma \ref{lemma:convergence} implies that 
\begin{equation*}
\lim_{k \to \infty} \int_0^{\hat{t}} \int_{\R^m} \left\langle V_t^k,T_{\mu_t^k}^{\nu}-Id \right\rangle \psi(t) d\mu_t^k dt = \int_0^{\hat{t}} \int_{\R^m} \left\langle V_t,T_{\mu_t}^{\nu}-Id \right\rangle \psi(t) d\mu_t dt.
\end{equation*}
By the triangle inequality
\begin{equation}\label{m02}
W_2(\mu_t,\nu)-W_2(\mu_t^k,\mu_t) \leqslant W_2(\mu_t^k,\nu) \leqslant W_2(\mu_t,\nu)+W_2(\mu_t^k,\mu_t).
\end{equation}
Therefore $\lim_{k\to\infty}W_2(\mu_t^k,\nu)=W_2(\mu_t,\nu)$. Furthermore, since $\mu_t^k \xrightarrow[]{W_2}\mu_t$ uniformly, inequality (\ref{m02}) implies that $W_2(\mu_t^k,\nu)$ is uniformly bounded. Hence, by dominated convergence theorem
\begin{equation*}
\lim_{k\to\infty}\int_0^{\hat{t}} W_2^2(\mu_t^k,\nu)\psi(t)dt=\int_0^{\hat{t}} W_2^2(\mu_t,\nu)\psi(t)dt.
\end{equation*}
In conclusion $\forall \nu \in B_{\delta/4}(\mu_t) \cap E_c$ and $\forall \psi \in C_c^{\infty}((0,\hat{t});[0,\infty))$ we have
\begin{small}
\begin{equation}\label{m03}
\int_0^{\hat{t}}E(\nu)\psi(t)dt -\int_0^{\hat{t}}E(\mu_t)\psi(t)dt \geqslant \int_0^{\hat{t}} \int_{\R^m}\left\langle -V_t,T_{\mu_t}^{\nu} -Id \right\rangle \psi(t) d\mu_t dt + \dfrac{\lambda}{2} \int_0^{\hat{t}} W_2^2(\mu_t,\nu)\psi(t)dt.
\end{equation}
\end{small}
Let $t_0$ be a Lebesgue point of the map $t \mapsto \int_0^{\hat{t}}E(\mu_t)\psi(t)dt + \int_{\R^m}\left\langle V_t,T_{\mu_t}^{\nu} -Id \right\rangle d\mu_t + \frac{\lambda}{2} \int_0^{\hat{t}} W_2^2(\mu_t,\nu)\psi(t)dt$. By considering a sequence of smooth mollifiers $\psi_n$ converging to the delta function at $t_0$, the inequality (\ref{m03}) is reduced to
\begin{equation}\label{m04}
E(\nu) -E(\mu_{t_0}) \geqslant \int_{\R^m}\left\langle -V_{t_0},T_{\mu_{t_0}}^{\nu} -Id \right\rangle d\mu_{t_0}+ \dfrac{\lambda}{2} W_2^2(\mu_{t_0},\nu).
\end{equation}
Therefore (\ref{m04}) holds for almost all $t$. By Lemma \ref{lem:subdif} we have
\begin{equation*}
V_t \in -\partial E(\mu_t) ~~~\text{for almost all $t\in[0,\hat{t}]$}.
\end{equation*}

We now study uniqueness of the solution. The available uniqueness proofs in the case of $\lambda$-convexity can be repeated in the domain of restricted $\lambda$-convexity because as soon as the flow exists clearly it dissipates the energy and the trajectories of the flow starting from $\mu$ remain in the domain of restricted $\lambda$-convexity for a short time where one can use $\lambda$-convexity. Hence uniqueness arguments can be repeated similar to the available proofs such as in \cite[Theorem 11.1.4]{AmbrosioBook}. Therefore we provide only the key ideas here.\\

Assume that we have two gradient flows $\mu_t^1$ and $\mu_t^2$ both starting from $\mu$. One can show that $W_2^2(\mu_t^1,\mu_t^2)$ is absolutely continuous in time and 
\begin{equation*}
\frac{d}{dt}W_2^2(\mu_t^1,\mu_t^2) \leqslant \lim_{h\downarrow 0}\frac{W_2^2(\mu_{t+h}^1,\mu_{t+h}^2)-W_2^2(\mu_{t+h}^1,\mu_t^2)}{h} + \lim_{h\downarrow 0}\frac{W_2^2(\mu_{t+h}^1,\mu_{t}^2)-W_2^2(\mu_t^1,\mu_t^2)}{h}.
\end{equation*}
By differentiability of Wasserstein metric (\ref{derivative of metric}), for almost all $t \in [0,\hat{t}]$ we have
\begin{equation*}
\frac{1}{2} \lim_{h \to 0}\frac{W_2^2(\mu_{t+h}^1,\mu_{t}^2)-W_2^2(\mu_t^1,\mu_t^2)}{h}=\int_{\R^m} \left\langle V_t^1, Id-T_{\mu_t^1}^{\mu_t^2} \right\rangle d\mu_t^1.
\end{equation*}
Therefore
\begin{equation}\label{l01}
\frac{d}{dt}W_2^2(\mu_t^1,\mu_t^2) \leqslant \int_{\R^m} \left\langle V_t^2, Id-T_{\mu_t^2}^{\mu_t^1} \right\rangle d\mu_t^2 + \int_{\R^m} \left\langle V_t^1, Id-T_{\mu_t^1}^{\mu_t^2} \right\rangle d\mu_t^1.
\end{equation}
Consider (\ref{m04}) along $\mu_t^1$. We have
\begin{equation}\label{l02}
E(\mu_t^2) -E(\mu_t^1) \geqslant \int_{\R^m}\left\langle -V_t,T_{\mu_t^1}^{\mu_t^2} -Id \right\rangle d\mu_t^1+ \dfrac{\lambda}{2} W_2^2(\mu_t^1,\mu_t^2).
\end{equation}
Rewriting (\ref{l02}) again along $\mu_t^2$ and using (\ref{l01}) result in
\begin{equation*}
\dfrac{1}{2}\dfrac{d}{dt}W_2^2(\mu_t^2,\mu_t^1)\leqslant -\lambda W_2^2(\mu_t^2,\mu_t^1).
\end{equation*}
Hence
\begin{equation*}
W_2(\mu_t^2,\mu_t^1)\leqslant e^{-\lambda t}W_2(\mu_0^2,\mu_0^1)=0 ~~~~\forall t \in [0,\hat{t}].
\end{equation*} 
\end{proof}\\


\section{Wasserstein gradient flow of the Dirichlet energy}

In this section we prove a local well-posedness result for the gradient flow of the Dirichlet energy on $S^1$. Energy or in short $E$ in this section always refers to the \textbf{Dirichlet energy}
\begin{equation}\label{Dirichlet energy}
E(\mu)=\left\lbrace \begin{aligned} &\dfrac{1}{2}(\partial_x u)^2 \quad &\text{if } \mu=udx, u \in H^1(S^1), \\ &+\infty &\text{else.}~~~~~  \end{aligned} \right. 
\end{equation} 
When convenient, we refer to an absolutely continuous measure $\mu=udx$ by its density $u$. In particular, by a smooth or positive measure we mean a measure with a smooth or positive density.


\subsection{Dirichlet energy on $S^1$}

The underlying space of the measures that we study in this section is $S^1$. We identify $S^1$ with $\mathbb{R}/\mathbb{Z}$. Because $S^1$ is a manifold, the theory developed in the previous section should be slightly modified. On a Riemannian manifold, there is the issue of existence and regularity of the optimal maps. This question has been an active area of research. Ma-Trudinger-Wang condition in \cite{MTW} is a famous example that studies this issue. In the case of $S^m$, the problem has been addressed and positive results are available such as \cite{McCannKim} and \cite{Loeper}. The results guarantee that between any pair of smooth positive measures $\mu,\nu$ on $S^m$ there exists a unique smooth optimal map $T_{\mu}^{\nu}$. To apply Theorem \ref{theorem1} to $S^1$, one has to replace the inner product in the subdifferential definition (\ref{def:subdiff}) with the inner product on the tangent space of $S^1$. For a given optimal map $\hat{T}$, the distance $|\hat{T}(x)-x|$ might not coincide with the geodesic distance of $S^1$. As was suggested in \cite{Slepcev}, this problem can be solved by representing $T:[0,1]\rightarrow[-\frac{1}{2},\frac{3}{2}]$ where $T(x)$ is the smallest element of $\hat{T}(x) \in \R/\Z$ such that $ |T(x)-x| \leqslant \frac{1}{2}$ and by relabelling $S^1$ by $[T(0),T(0)+1]$. It is easy to check that $T$ is an optimal map from $[0,1]$ to $[T(0),T(0)+1]$, furthermore $T$ is monotone and the geodesic distance of $S^1$ coincides with $|T(x)-x|$.\\

Let $\mu_0=u_0dx$ and $\mu_1=u_1dx$ be two smooth measures on $S^1$ and let $T$ be the optimal map between them. Then by Monge-Amp\`ere equation (\ref{Monge}) we have
\begin{equation*}
u_1(T(X))=\dfrac{u_0(x)}{T'(x)}.
\end{equation*}
Since the geodesics are given by the push forward of linear interpolation of the optimal map and the identity map, the explicit form of the geodesic $u_s$ between $u_0$ and $u_1$ is given by
\begin{equation*}
u_s((1-s)x+sT(x))=\dfrac{u_0(x)}{(1-s)+sT'(x)}.
\end{equation*}
In the notation of the previous section, if we think of $f=T-Id$ as the tangent vector field that connects $u_0$ to $u_1$, the geodesic equation can be written as
\begin{equation}\label{interpolant equ}
u_s(x+sf(x))=\dfrac{u_0(x)}{1+sf'(x)}.
\end{equation}

We will see in Lemma \ref{lem smooth app.} that for studying  restricted $\lambda$-convexity of the energy, it is enough to consider measures with smooth and positive densities. By the derivative formulation of $\lambda$-convexity (\ref{def:smooth conv along curve}) the energy is $\lambda$-convex along the geodesic $\mu_s$ connecting $\mu_0$ to $\mu_1$ if
\begin{equation}\label{smooth convexity}
\dfrac{d^2}{ds^2}E(u_s)\geqslant \lambda W_2^2(u_0,u_1).
\end{equation}

We start by proving that the Dirichlet energy is not $\lambda$-convex for any $\lambda$. This is known to the community, and in \cite{Slepcev} Carrillo and Slepčev proved that the Dirichlet energy is not convex on $S^1$. We will study a scalable family of functions to prove the lack of convexity for any $\lambda \in \mathbb{R}$. Let $u_s$ be the geodesic connecting $u_0=u$ to $u_1$, and let $f$ be the corresponding tangent vector field. We compute the second derivative of the energy along the geodesic
\begin{align*}
\left. \dfrac{d^2 E(u_s)}{ds^2}\right|_{s=0}
=&\left. 
\frac{d^2}{ds^2}
\right|_{s=0}
\int_{S^1} (\partial_y u_s(y))^2 \, dy\,.
\end{align*} 
By the Monge-Amp\`ere equation (\ref{interpolant equ}) and the change of variables $y=x+sf(x)$ we have

\begin{equation}\label{SecondDerivative}
\begin{aligned}
\left. \frac{d^2 E(u_s)}{ds^2}\right|_{s=0} 
&= \left. \frac{d^2}{ds^2}\right|_{s=0} \int_{S^1} \left(\frac{\partial x}{\partial y} \frac{\partial}{\partial x}
\frac{u(x)}{1+sf'(x)}\right)^2 \frac{\partial y}{\partial x} dx\\
&= \left. \frac{d^2}{ds^2}\right|_{s=0} \int_{S^1} \left(\frac{1}{1+sf'(x)}\partial_x 
\frac{u(x)}{1+sf'(x)}\right)^2 (1+sf'(x)) \, dx\\
&=\left. \frac{d^2}{ds^2}\right|_{s=0} \int_{S^1} \frac{((1+sf'(x))u'(x)-su(x)f''(x))^2}{(1+sf'(x))^3}  \, dx \\
&= 2\int_{S^1}  \bigl\{ (f'' u)^2 + 8 (f'' u) (f' u') +  6 (f' u')^2 \bigr\} \, dx.
\end{aligned}
\end{equation} 
If the energy $E$ is $\lambda$-convex, then
\begin{equation}\label{equ:cal}
\underbrace{\int_{S^1}
\bigl\{ (f'' u)^2 + 8(f'' u)(f' u') + 6 (f'u')^2\bigr\} \,dx}_A \geqslant
\lambda \underbrace{ \int_{S^1} f^2 u\, dx}_B. 
\end{equation}
We use (\ref{equ:cal}) as a guide to find a counter-example. In the example, $u$ and $f$ are not smooth, and we cannot directly apply this computation but, Lemma \ref{lem smooth app.} validates the calculations.\\

We view $S^1$ as the interval $[-1/2,1/2]$ with the endpoints identified. The construction of the example is simple: let $u=1-4|x|$ and $f'=u^{-1}$, this forces the integrand of $A$ to be negative, and the rest follows from a scaling argument. We have to make some modifications to the functions so that the integral converges and the mass is normalized to $1$. We define $u$ and $f'$ as follows
\begin{equation*}
  u(x)= \begin{cases}
         \frac{81}{16}(1-4|x|)\,,  \quad &0\leqslant |x|\leqslant \frac{2}{9} , \\
          \frac{9}{16} \,,      &\frac{2}{9} \leqslant |x|\leqslant \frac{3}{8}, \\
          \frac{9}{4}(1-2|x|) \,,  &\frac{3}{8}\leqslant |x| \leqslant \frac{1}{2}         
        \end{cases}
          \quad f'(x)= \begin{cases}
          \frac{16}{81(1-4|x|)} \,,         \quad  &0\leqslant |x|\leqslant \frac{2}{9}  , \\
		  \frac{16 }{11}(3-8|x|)\,, 		         &\frac{2}{9}  \leqslant |x|\leqslant \frac{3}{8}, \\
		  0 \,,        			 		&\frac{3}{8}\leqslant |x| \leqslant \frac{1}{2}.
  \end{cases}
\end{equation*}

\begin{figure}[htb]
\centering
\includegraphics[width=0.5 \textwidth]{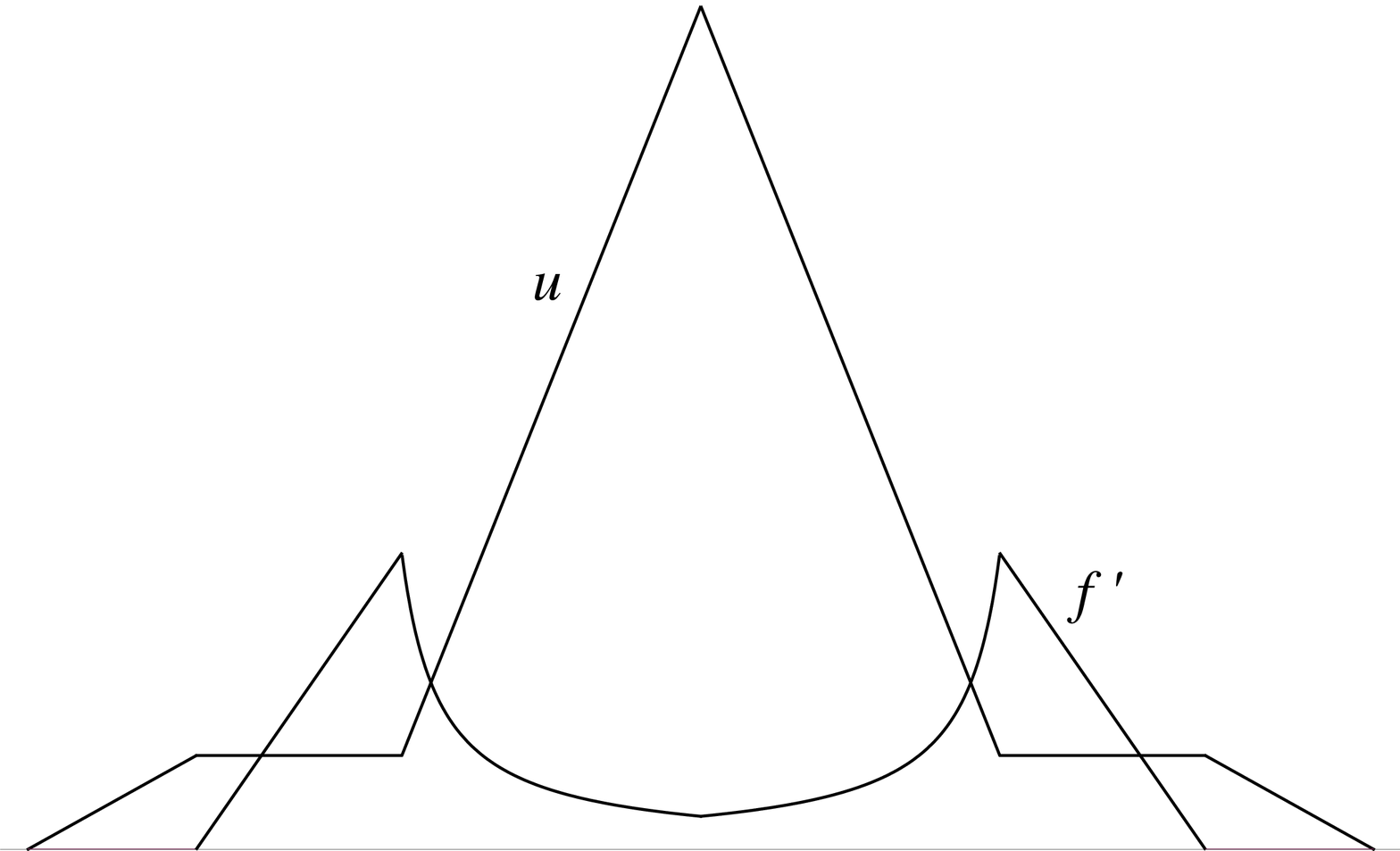}
\caption{Graph of $u$ and $f'$}
\label{fig:fandu}
\end{figure}

By scaling $u_h(x):=hu(h x)$ and $f'_h(x):=\frac{1}{h} f'(h x)$ we have
\begin{equation*}
A=-C_1 h^2\,, 
\quad B=C_2
\end{equation*}
for some positive constants $C_1,C_2$. If the energy is $\lambda$-convex then we must have $A \geqslant \lambda B$ and it should hold uniformly for any such $u$ and $f$. But for a fixed $\lambda$, we can choose $h$ large enough so that the opposite inequality holds. This means that the Dirichlet energy is not $\lambda$-convex on $\mathcal{P}_2(S^1)$.\\

In the example above, by pushing $h$ to larger numbers the lack of convexity becomes worse. By looking at the equation of $u(x)$, it is clear that the Dirichlet energy of $u(x)$ gets bigger for larger values of $h$. This example hints that one of the obstructions against the $\lambda$-convexity of the Dirichlet energy is the magnitude of the energy which can be controlled on energy sub-level sets.\\


\subsection{Restricted $\lambda$-convexity of Dirichlet energy}

\begin{lemma}[Uniform convergence on energy sub-level sets.]\label{theo:Uniform con.}
Uniform convergence and Wasserstein convergence are equivalent on energy sub-level sets of Dirichlet energy on $S^1$. In particular for two measures $\mu_1=u_1dx$ and $\mu_2=u_2dx$ with $E(\mu_1),E(\mu_2)< c <+\infty$ we have
\begin{equation}\label{equ:equivalence}
W_2^2(\mu_1,\mu_2)\geqslant \alpha |u_1-u_2|_{\infty}^{\beta}
\end{equation}
where $\alpha=\alpha(c)$ and $\beta$ are constants.
\end{lemma}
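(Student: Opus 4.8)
The plan is to convert the energy bound into a uniform Hölder modulus for the densities via the embedding $H^1(S^1)\hookrightarrow C^{1/2}(S^1)$, and then to bound $W_2$ from below through $W_1$ and Kantorovich--Rubinstein duality. The reverse implication in the stated equivalence is elementary: uniform convergence of densities on the compact space $S^1$ gives narrow convergence with uniformly bounded second moments, hence $W_2$-convergence by (\ref{narrow}); so the real content of the lemma is inequality (\ref{equ:equivalence}).

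First I would record the quantitative estimate: if $E(\mu)=\frac12\int_{S^1}(\partial_x u)^2\,dx<c$ then $\|\partial_x u\|_{L^2(S^1)}<\sqrt{2c}$, and for $x,y\in S^1$ at geodesic distance $d(x,y)$, integrating $\partial_x u$ along the shorter arc and applying Cauchy--Schwarz gives $|u(x)-u(y)|\le \sqrt{2c}\,d(x,y)^{1/2}$. Consequently $u_1-u_2$ is $\tfrac12$-Hölder with constant $2\sqrt{2c}$; and since each $u_i\ge 0$ with $\int_{S^1}u_i\,dx=1$, the densities are bounded by a constant depending only on $c$, so $M:=|u_1-u_2|_\infty$ is bounded in terms of $c$.

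Since $u_1-u_2\in C(S^1)$ and $S^1$ is compact, the maximum is attained at some $x_0$; after possibly swapping $u_1,u_2$, assume $u_1(x_0)-u_2(x_0)=M$. From the Hölder bound, $u_1(x)-u_2(x)\ge M-2\sqrt{2c}\,d(x,x_0)^{1/2}$, so on the arc $B_r(x_0)=\{\,d(\cdot,x_0)<r\,\}$ with $r=M^2/(32c)$ we have $u_1-u_2\ge M/2$ (if this $r$ exceeds, say, $\tfrac18$, which forces $M\gtrsim\sqrt c$, one instead takes $r=\tfrac18$ and argues directly; this only changes the final constant). Now I would invoke $W_1(\mu_1,\mu_2)=\sup\{\int_{S^1}\varphi\,d(\mu_1-\mu_2):\mathrm{Lip}(\varphi)\le1\}$ with the tent function $\varphi(x)=\max\bigl(0,\,r-d(x,x_0)\bigr)$, which is $1$-Lipschitz on $S^1$ and supported on $B_r(x_0)$ with $\int_{S^1}\varphi\,dx=r^2$. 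Then
\[
W_1(\mu_1,\mu_2)\ \ge\ \int_{S^1}\varphi\,(u_1-u_2)\,dx\ \ge\ \frac{M}{2}\int_{B_r(x_0)}\varphi\,dx\ =\ \frac{Mr^2}{2}\ =\ \frac{M^5}{2048\,c^2},
\]
and since $W_1\le W_2$ (Cauchy--Schwarz applied to an optimal $W_2$-plan) this yields (\ref{equ:equivalence}) with $\beta=10$ and $\alpha=\alpha(c)=(2048\,c^2)^{-2}$, up to absorbing the $r=\tfrac18$ case using the a priori bound on $M$.

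The proof is essentially bookkeeping once the Hölder estimate is in hand; the only genuinely delicate points are the realization that one should bound $W_2$ from below through $W_1$ (estimating $W_2$ directly is awkward) and carrying the circle geometry correctly — working throughout with the geodesic distance, checking the tent function is globally $1$-Lipschitz, and ensuring the arc $B_r(x_0)$ does not wrap around — together with the mild case distinction when $M$ is not small. The exponent $\beta=10$ produced this way is far from sharp, but any finite $\beta$ and any positive $\alpha(c)$ are all that the later sections require.
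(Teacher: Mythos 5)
Your proof is correct, and while the first half (the $H^1\hookrightarrow C^{0,1/2}$ embedding giving a uniform H\"older modulus $\sqrt{2c}\,d(x,y)^{1/2}$ on energy sub-level sets, and the easy direction via narrow convergence) coincides with the paper's, the quantitative lower bound on $W_2$ is obtained by a genuinely different route. The paper argues geometrically: from the two H\"older cones at the maximizer $x_0$ it builds a ``star-shaped'' region separating $u_1$ from $u_2$, inscribes a rectangle of area $\sim h^3/c$, and asserts that this much mass must be transported a distance $\sim h^2/c$ out of the region, giving $W_2^2\gtrsim c^{-3}h^7$ directly. You instead test Kantorovich--Rubinstein duality for $W_1$ against a tent function supported on the arc where $u_1-u_2\ge M/2$, and then pass to $W_2$ via $W_1\le W_2$, arriving at $W_2^2\gtrsim c^{-4}M^{10}$. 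Your exponent is worse ($\beta=10$ versus the paper's $7$), but this is irrelevant downstream --- the lemma is only ever used qualitatively, to convert $W_2$-convergence into uniform convergence on $E_c$. What your argument buys is rigor and robustness: the paper's ``mass at least equal to the area of R should be transported outside of S'' step is stated informally and requires some care to justify, whereas the duality pairing with an explicit $1$-Lipschitz test function is airtight, and it adapts just as easily as the star-shape argument to the modified densities $u^a$ used in Section 4. You also correctly handle the two boundary issues the paper glosses over (the arc possibly exceeding the injectivity radius of $S^1$, handled by the a priori bound on $M$ in terms of $c$, and the global Lipschitz bound of the tent function with respect to the geodesic distance).
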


\begin{proof}
One side of the equivalence is easy. Assuming $u_n \xrightarrow[]{uniform}u_0 $ we have
\begin{equation*}
\int_{S^1}\psi u_ndx \longrightarrow \int_{S^1}\psi udx ~~~\forall \psi \in C^0(S^1)
\end{equation*}
which implies Wasserstein convergence of $\mu_n=u_ndx$ to $\mu=udx$ by (\ref{narrow}) and finiteness of the second moments on $S^1$.\\

For the converse inequality, we first study the regularity of a measure with finite energy. Let $\nu=vdx \in E_c$. By Poincare's inequality and $\int_{S^1}vdx=1$ we have
\begin{equation*}
\int_{S^1}|v|^2dx \leqslant \int_{S^1}|v'|^2dx + 2\int_{S^1}|v|dx + \int_{S^1}dx \leqslant c + 3.
\end{equation*}
Therefore $H^1$-norm of $v$ is bounded by its energy. The Sobolev embedding theorem implies that $v$ is $C^{0,1/2}$ continuous and we have
\begin{equation}\label{module of continuity}
\left| v(x)-v(y) \right| \leqslant \left( \int_{S^1} |v'|^2dx\right)^{1/2} \left( \int_x^y dx \right)^{1/2} \leqslant \sqrt{c|x-y|} ~~~\forall x,y\in S^1.
\end{equation}
Therefore the modulus of continuity is $\sqrt{c}$. Let $\mu_1$ and $\mu_2$ be as in the assumption. Therefore $u_1$ and $u_2$ are $C^{0,1/2}$ continuous with constant $\sqrt{c}$. Assume that $|u_1-u_2|_{\infty}\geqslant h>0$. In particular without loss of generality assume that for some point $x_0 \in S^1$ we have $u_1(x_0)-u_2(x_0)\geqslant h$. For every $x\in S^1$, we have 
\begin{equation}
\begin{aligned}
&u_1(x)\geqslant u_1(x_0)-\sqrt{c|x|}\\
&u_2(x)\leqslant u_2(x_0)+\sqrt{c|x|}.
\end{aligned}
\end{equation}

\begin{figure}
\centering
\includegraphics[width=0.6\textwidth]{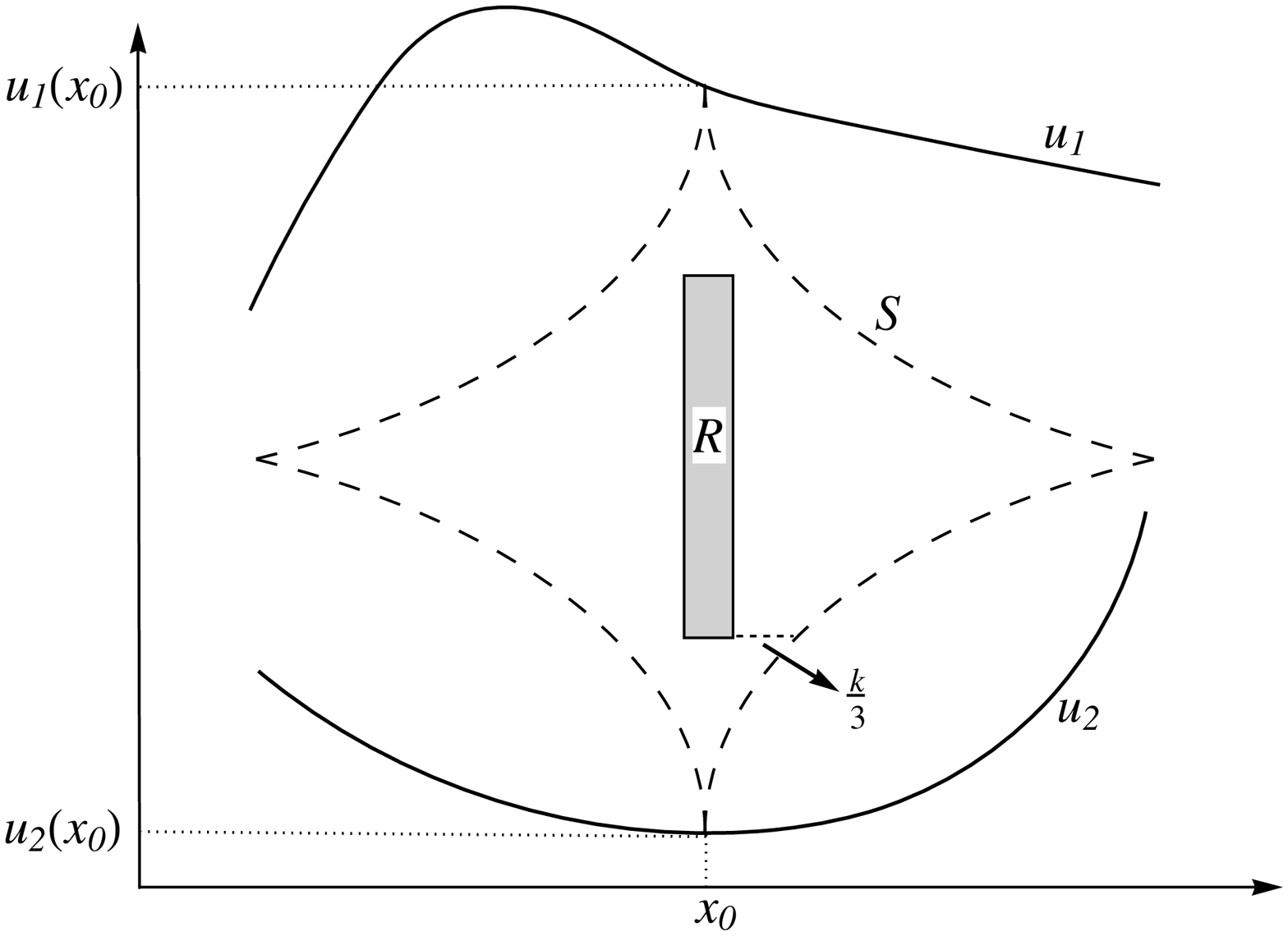}
\caption{Wasserstein $\Longleftrightarrow$ Uniform}
\label{fig:star}
\end{figure}

Therefore $u_1$ lies above and $u_2$ is below the star-like shape in Figure (\ref{fig:star}). Call the star-like shape by S. Consider a rectangle R in the center of S with height $\frac{h}{2}$ and width $\frac{k}{3}$ where $k$ is the width of S at the height $u_2(x_0)+\frac{h}{4}$. We have $k=\frac{ h^2}{8c}$ and the area of R is given by $\frac{h^3}{48c}$. In order to transport the measure $\mu_1$ to $\mu_2$, some mass at least equal to the area of R should be transported outside of S. Therefore
\begin{equation*}
\begin{aligned}
W_2^2(\mu_1,\mu_2) &\geqslant \lbrace \text{area of R} \rbrace \lbrace \text{distance required to move R outside of S} \rbrace^2 \\
&\geqslant \frac{h^3}{48c}.(\frac{k}{3})^2 \\
&\geqslant \dfrac{1}{384 c^3}|u_1-u_2|_{\infty}^7.
\end{aligned}
\end{equation*}
\end{proof}


\begin{lemma} [Lower Semi Continuity.]\label{lem:l.s.c}
The Dirichlet energy is lower semi continuous with respect to the Wasserstein metric on $S^1$.
\end{lemma}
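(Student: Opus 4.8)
The plan is to run the standard direct-method argument, using the fact that finite Dirichlet energy forces strong regularity on a measure, together with the equivalence of topologies established in Lemma~\ref{theo:Uniform con.}. Suppose $\mu_n \xrightarrow{W_2}\mu$; we must show $E(\mu)\leqslant \liminf_n E(\mu_n)$. We may assume $L:=\liminf_n E(\mu_n)<+\infty$, since otherwise there is nothing to prove, and after passing to a subsequence (not relabelled) we may assume $E(\mu_n)\to L$. For $n$ large, $E(\mu_n)\leqslant L+1$, so $\mu_n=u_n\,dx$ with $u_n\in H^1(S^1)$; moreover, by the Poincar\'e estimate used in the proof of Lemma~\ref{theo:Uniform con.}, $\|u_n\|_{H^1(S^1)}^2\leqslant 2(L+1)+3$, so the sequence $\{u_n\}$ is bounded in $H^1(S^1)$.

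By weak compactness of bounded sets in the Hilbert space $H^1(S^1)$, after extracting a further subsequence we have $u_n\rightharpoonup w$ weakly in $H^1(S^1)$ for some $w\in H^1(S^1)$. On the other hand, the measures $\mu_n$ all lie in the sub-level set $E_{L+1}$ and form a $W_2$-convergent, hence $W_2$-Cauchy, sequence, so Lemma~\ref{theo:Uniform con.} shows that $\{u_n\}$ is uniformly Cauchy and therefore converges uniformly to some continuous $u_\infty$. Since both the uniform limit and the weak $H^1$ limit are limits in $L^2(S^1)$, we get $u_\infty=w\in H^1(S^1)$. Uniform convergence implies narrow convergence, and since $S^1$ is compact the second moments are automatically bounded, so by (\ref{narrow}) we have $\mu_n\xrightarrow{W_2} w\,dx$. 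Combining this with $\mu_n\xrightarrow{W_2}\mu$ and uniqueness of limits gives $\mu=w\,dx$; in particular $\mu=u\,dx$ with $u:=w\in H^1(S^1)$, so $E(\mu)=\frac{1}{2}\int_{S^1}(\partial_x u)^2\,dx$ is finite.

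It remains to compare energies. The functional $v\mapsto \frac{1}{2}\int_{S^1}(\partial_x v)^2\,dx$ is convex and strongly continuous on $H^1(S^1)$, hence weakly lower semicontinuous; applied to $u_n\rightharpoonup u$ this yields
\[
E(\mu)=\frac{1}{2}\int_{S^1}(\partial_x u)^2\,dx\leqslant \liminf_{n\to\infty}\frac{1}{2}\int_{S^1}(\partial_x u_n)^2\,dx=\liminf_{n\to\infty}E(\mu_n)=L,
\]
which is exactly the claimed inequality (and if $\mu$ is not absolutely continuous with $H^1$ density, the argument above already contradicts $L<+\infty$, so $E(\mu)=+\infty$ cannot occur in that case).

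The only place where the geometry of the Wasserstein space genuinely enters — and the main point to get right — is the identification of the weak $H^1$ limit $w$ with the density of the $W_2$-limit $\mu$. This is precisely what Lemma~\ref{theo:Uniform con.} provides: on energy sub-level sets, Wasserstein convergence of the measures is equivalent to uniform convergence of their densities, which pins down the limiting density and lets the classical weak lower semicontinuity of the Dirichlet functional do the rest. Everything else is routine functional analysis.
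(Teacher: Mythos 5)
Your proof is correct and follows essentially the same route as the paper's: bound the $H^1$ norm via the energy, extract a weak $H^1$ limit by Banach--Alaoglu, identify that limit with the density of the $W_2$-limit using Lemma~\ref{theo:Uniform con.}, and conclude by weak lower semicontinuity of the Dirichlet integral. The only (cosmetic) difference is in the identification step, where you use the uniform Cauchy property and narrow convergence while the paper tests against continuous functions and uses strong $L^2$ convergence; your variant even sidesteps the need to know a priori that the limit lies in the sub-level set.
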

\begin{proof}
Let $u_n \xrightarrow[]{W_2} u$. Since we have to prove $E(u)\leqslant \liminf_{n \to \infty} E(u_n)$, we can assume that $E(u_n)$ is bounded. This implies that $H^1$-norm of the sequence is bounded. By Banach–Alaoglu theorem, $u_n$ has a weak limit point $v \in H^1$. Therefore $u_n$ converges to $v$ strongly in $L^2$. We claim that $v=u$. Let $\psi \in C^0 (S^1) $ we have
\begin{align*}
\left| \int_{s^1} \psi(x) (u-v)(x)dx \right|\leqslant & \left| \int_{s^1} \psi(x) (u-u_n)(x)dx \right| + \left| \int_{s^1} \psi(x) (u_n-v)(x)dx \right| \\ 
\leqslant & \left| \int_{s^1} \psi(x) (u-u_n)(x)dx \right| +  (\int_{s^1} |\psi|^{2}dx)^{1/2} (\int_{s^1} |u_n-v|^2 dx)^{1/2}
\end{align*}
By Lemma \ref{theo:Uniform con.} the Wasserstein and uniform convergences are equivalent on energy sub-level sets. Therefore the first term in the last inequality goes to zero. The second term also goes to zero because $u_n$ converges to $v$ strongly in $L^2$. Hence $u=v$ almost everywhere. Because $u$ and $v$ are continuous, we have $u=v$. The Dirichlet energy is known to be lower semi continuous under weak $H^1$ convergence (for example see \cite[Theorem 8.2.1]{Evans}). Hence we have $E(u)=E(v)\leqslant \liminf_{n \to \infty} E(u_n)$.
\end{proof}\\


The following lemma validates smooth calculation in the sense that for studying restricted $\lambda$-convexity of the energy, one can study restricted $\lambda$-convexity of the energy only on smooth measures.\\

\begin{lemma}[Approximation by smooth measures.]\label{lem smooth app.}
Let $\mu \in E_c$. Assume that the energy is restricted $\lambda$-convex on smooth measures in $B_{\delta}\cap E_{c}$. Then $E$ is restricted $\lambda$-convex at $\mu$.
\end{lemma}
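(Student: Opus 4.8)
The plan is to transfer the defining inequality of restricted $\lambda$-convexity from smooth measures to arbitrary measures $\nu_1,\nu_2\in B_\delta(\mu)\cap E_c$ by a density argument. First I would fix two such measures $\nu_1=v_1dx$ and $\nu_2=v_2dx$ and the geodesic $\mu_s$ joining them; I must show $E(\mu_s)\le (1-s)E(\nu_1)+sE(\nu_2)-\frac\lambda2 s(1-s)W_2^2(\nu_1,\nu_2)$. The idea is to approximate $v_1$ and $v_2$ by smooth positive densities $v_1^n,v_2^n$ — say by mollification followed by renormalization, possibly adding a small positive constant to guarantee strict positivity — chosen so that (i) $v_i^n\to v_i$ strongly in $H^1(S^1)$, hence $E(v_i^n)\to E(v_i)$ and, by the equivalence of uniform and Wasserstein convergence on sub-level sets (Lemma \ref{theo:Uniform con.}) together with (\ref{narrow}), $v_i^n\xrightarrow{W_2}v_i$; and (ii) the energies $E(v_i^n)$ stay below $c$ and the measures stay in $B_\delta(\mu)$, so that $v_i^n\in B_\delta(\mu)\cap E_c$ for large $n$. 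Because $E(v_i)<c$ strictly and $W_2(\mu,\nu_i)<\delta$ strictly, a sufficiently good approximation will respect both open constraints.

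Next I would invoke the hypothesis: since the $v_i^n$ are smooth and lie in $B_\delta(\mu)\cap E_c$, the energy is $\lambda$-convex along the geodesic $\mu_s^n$ connecting them, so
\begin{equation*}
E(\mu_s^n)\le (1-s)E(v_1^n)+sE(v_2^n)-\frac{\lambda}{2}s(1-s)W_2^2(v_1^n,v_2^n).
\end{equation*}
Then I pass to the limit $n\to\infty$. On the right side, $E(v_i^n)\to E(v_i)$ by $H^1$-convergence and $W_2(v_1^n,v_2^n)\to W_2(\nu_1,\nu_2)$ by the triangle inequality together with $v_i^n\xrightarrow{W_2}v_i$. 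On the left side I need $\mu_s^n\xrightarrow{W_2}\mu_s$ for each fixed $s$ — this follows from stability of optimal maps (as used in Lemma \ref{lemma:convergence}): $W_2$-convergence of the endpoints forces narrow convergence of the optimal plans $(Id\times T_{v_1^n}^{v_2^n})_\#v_1^n$ to $(Id\times T_{\nu_1}^{\nu_2})_\#\nu_1$, hence narrow convergence of $\mu_s^n=((1-s)Id+sT_{v_1^n}^{v_2^n})_\#v_1^n$ to $\mu_s$ with uniformly bounded second moments, i.e. $W_2$-convergence. Finally, lower semicontinuity of $E$ under Wasserstein convergence (Lemma \ref{lem:l.s.c}) gives $E(\mu_s)\le\liminf_n E(\mu_s^n)$, and combining these yields the desired inequality. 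It remains to check that the geodesic $\mu_s$ itself lies in $E_c$ for $s\in[0,1]$ so that the restricted-convexity statement is meaningful along it — but this is immediate from the inequality just proved, since the right-hand side is $\le\max(E(\nu_1),E(\nu_2))<c$ (when $\lambda\ge0$; when $\lambda<0$ one shrinks $\delta$ so that $\frac{|\lambda|}{8}W_2^2<c-\max(E(\nu_1),E(\nu_2))$, or simply observes $\mu_s\in E_c$ is needed only where the geodesic stays in the restricted domain).

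The main obstacle I anticipate is the simultaneous control of the two \emph{open} constraints $E(v_i^n)<c$ and $W_2(\mu,v_i^n)<\delta$ under the smoothing, and more subtly the left-endpoint convergence $\mu_s^n\to\mu_s$: mollification is well-behaved in $H^1$, but one must ensure the approximants are genuine probability densities (renormalization changes the $H^1$ norm slightly, which is harmless since the normalizing constants tend to $1$) and strictly positive so the hypothesis applies to them — adding $\varepsilon_n\to 0$ and renormalizing handles positivity. The stability-of-optimal-maps step for the interpolants is the technical heart; it is the same mechanism already exploited in Lemma \ref{lemma:convergence}, so I would cite \cite[Theorem 5.20]{Villani} and the uniqueness of the Brenier--McCann map on $S^1$ (valid here because all measures involved have positive continuous densities, by the regularity theory cited for $S^m$). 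Everything else is routine limiting.
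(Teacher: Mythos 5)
Your proposal is correct and follows essentially the same route as the paper's proof: mollify the two endpoint densities, check via the equivalence of uniform and Wasserstein convergence and the convergence of $E$ under mollification that the approximants stay in the open set $B_{\delta}(\mu)\cap E_c$, apply the smooth-measure hypothesis along their geodesics, and pass to the limit using stability of optimal plans for the interpolants together with lower semicontinuity of $E$. The extra care you take with renormalization, strict positivity, and whether the geodesic itself stays in $E_c$ is harmless but not needed (convolution on $S^1$ with a probability kernel preserves mass and strict positivity, and the lemma's conclusion is just the convexity inequality), and the paper omits it.
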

\begin{proof}
Let $\mu_0=u_0dx,~\mu_1=u_1dx \in B_{\delta}\cap E_c$ and let $\eta_k$ be a standard smooth mollifier converging to the Dirac delta function. Define $u_{k,i}(x):= \eta_k \ast u_i(x)$ for $i=0,1$ where $\ast$ is the convolution on $S^1$. Since $ u_{k,i} \xrightarrow{uniformly} u_i$, by Lemma \ref{theo:Uniform con.} we have $u_{k,i} \xrightarrow{W_2} u_i$. Therefore for large enough $k$ we have $u_{k,i} \in B_{\delta}(u)$. The energy also converges, because
\begin{equation}\label{x01}
E(u_{k,i})=\int_{S^1} \left( \partial_x (u_i \ast \eta_k) \right) ^2 dx=\int_{S^1}\left( (\partial_x u_i) \ast \eta_k \right) ^2 dx \xrightarrow[k \to \infty]{} \int_{S^1}\left( \partial_x u_i\right)^2 dx = E(u_i).
\end{equation}
Hence $u_{k,i} \in B_{\delta}(u) \cap E_c$ for large enough $k$. By smoothness of $u_{k,i}$ and the assumption of the lemma, we have $\lambda$-convexity of the energy along the geodesics $u_{k,s}$ connecting $u_{k,0}$ to $u_{k,1}$
\begin{equation}\label{DDD}
E(u_{k,s})\leqslant (1-s)E(u_{k,0})+sE(u_{k,1})-\dfrac{\lambda}{2}s(1-s)W_2^2(u_{n,0},u_{k,1}).
\end{equation}
Let $\gamma_k$ and $\gamma$ be in order the optimal plan connecting ${\mu_{k,0}=u_{k,0}dx}$ to $\mu_{k,1}=u_{k,1}dx$ and the optimal plan connecting $\mu_0$ to $\mu_1$. By stability of the optimal plans \cite[Theorem 5.20]{Villani} $\gamma_k$ converges in narrow topology to $\gamma$ along a subsequence which after relabelling we assume to be the whole sequence. Equivalence of narrow and Wasserstein convergence (\ref{equ:equivalence}) on $S^1 \times S^1$ implies
\begin{equation*}
\mu_{k,s}=\left((1-s)\Pi^1 + s\Pi^2 \right)_{\#}\gamma_k  \xrightarrow{W_2} \left((1-s)\Pi^1+ s\Pi^2 \right) _{\#}\gamma =\mu_s
\end{equation*}
where $\Pi^i$ is the projection to the $i^{th}$ coordinate and $\mu_s$ is the geodesic connecting $\mu_0$ to $\mu_1$. The lower semi-continuity of Dirichlet energy \ref{lem:l.s.c} yields $E(\mu_s)\leqslant \liminf_{k \to +\infty}E(\mu_{k,s})$. Hence by taking the limit of (\ref{DDD}) we have
\begin{equation*}
E(\mu_s)\leqslant (1-s)E(\mu_0)+sE(\mu_1)-\dfrac{\lambda}{2}s(1-s)W_2^2(\mu_0,\mu_1).
\end{equation*}
\end{proof}\\


In the following lemma we prove that the energy is finite along a geodesic, provided that the energies of the end points are finite.\\

\begin{lemma}[Energy of the interpolant.]\label{lem interpolant regularity}
Let $\mu_0=u_0dx$ and $\mu_1=udx$ be two smooth measures with $E(\mu_0),E(\mu_1)< c< +\infty$ and $u_0,u_1 > m >0$. Then there are constants $\hat{c}<+\infty$ and $\hat{m}>0$ depending only on $c$ and $m$ such that $E(\mu_s) < \hat{c}$ and $u_s > \hat{m}$ along the geodesic $\mu_s$ connecting $\mu_0$ to $\mu_1$.
\end{lemma}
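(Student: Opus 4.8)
The statement asserts two-sided control along the geodesic: an upper bound $\hat c$ on the Dirichlet energy and a lower bound $\hat m$ on the density, both depending only on the data $c$ and $m$. I would work with the explicit geodesic formula (\ref{interpolant equ}), $u_s(x+sf(x)) = u_0(x)/(1+sf'(x))$, where $f = T - Id$ is the optimal tangent field. The key structural fact is that, since $T$ is the monotone optimal map between the two positive densities, $1 + sf'(x) = (1-s) + sT'(x) > 0$ for $s \in [0,1]$, and by the change-of-variables / Monge–Ampère relation this quantity is a convex combination of $1$ and $T'(x) = u_0(x)/u_1(T(x))$. Thus $u_s$ evaluated at the transported point is a \emph{harmonic-type} interpolation of $u_0(x)$ and $u_1(T(x))$; in particular the pointwise bounds $u_0, u_1 > m$ immediately give a lower bound on $u_s$ of the form $u_s \geq \hat m(m) > 0$, essentially because $\frac{u_0(x)}{(1-s)+sT'(x)}$ lies between $u_0(x)$ and $u_1(T(x))$ (a weighted harmonic mean stays between its arguments). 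This disposes of the density lower bound with $\hat m = m$ (or a constant multiple).

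**Energy bound.** For the upper bound on $E(\mu_s) = \frac12\int (\partial_y u_s)^2\,dy$, I would differentiate (\ref{interpolant equ}) and change variables $y = x + sf(x)$, $dy = (1+sf'(x))\,dx$, obtaining, as in the computation leading to (\ref{SecondDerivative}),
\[
\partial_y u_s(y) = \frac{(1+sf'(x))u_0'(x) - s\,u_0(x)f''(x)}{(1+sf'(x))^3},
\]
so that
\[
E(\mu_s) = \frac12 \int_{S^1} \frac{\big((1+sf'(x))u_0'(x) - s\,u_0(x)f''(x)\big)^2}{(1+sf'(x))^5}\,dx.
\]
The plan is to bound the numerator and the denominator separately using the data. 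From $E(\mu_1) < c$ and the Sobolev estimate in the proof of Lemma \ref{theo:Uniform con.}, $u_1$ is bounded above by a constant $C(c)$ and below by $m$, and likewise $u_0$; hence $T'(x) = u_0(x)/u_1(T(x))$ satisfies $m/C(c) \leq T'(x) \leq C(c)/m$, which gives $1+sf'(x)$ bounded above and below by positive constants depending only on $c,m$. This controls the $(1+sf')^5$ denominator and the $(1+sf')u_0'$ term in $L^2$ (since $\|u_0'\|_2^2 \leq 2c$). The remaining term, $s\,u_0(x)f''(x)$, is the delicate one: I need to bound $\int (f'')^2$, i.e. second derivatives of the optimal map. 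Differentiating the Monge–Ampère relation $u_1(T(x))T'(x) = u_0(x)$ gives $u_1'(T)(T')^2 + u_1(T)T'' = u_0'$, so $T'' = (u_0' - u_1'(T)(T')^2)/u_1(T)$, and since $\|u_0'\|_2, \|u_1'\|_2 \leq \sqrt{2c}$, $u_1 \geq m$, and $T'$ is bounded, one gets (after another change of variables to handle the $u_1'(T)$ term) $\|f''\|_{L^2} = \|T''\|_{L^2} \leq C(c,m)$. Combining these bounds yields $E(\mu_s) \leq \hat c(c,m)$.

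**Main obstacle.** The crux is rigorously controlling $\|T''\|_{L^2}$ purely from the energy and lower bounds of the endpoints — this is where the first-order regularity of the densities ($u_i \in H^1$, with $H^1$-norm controlled by the energy, plus the uniform positivity) is exactly enough to close the estimate via the differentiated Monge–Ampère equation, but one must be careful that $u_1'$ is only in $L^2$, so the term $u_1'(T(x))(T'(x))^2$ must be estimated after pulling back to the $u_1$-side by the substitution $y = T(x)$, using the Jacobian bound $T' \geq m/C(c)$ to transfer $\int |u_1'(T)|^2 (T')^4 \, dx \leq \frac{C(c)}{m}\int |u_1'(y)|^2 (T'(T^{-1}(y)))^3 \, dy \leq C(c,m)$. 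Since the lemma's hypothesis already grants smoothness of $u_0, u_1$ (hence of $T$ and $f$), all these manipulations are legitimate classically, and the point is solely that the resulting constants depend only on $c$ and $m$, not on higher norms of the data; uniformity in $s \in [0,1]$ is automatic since every bound above is uniform in $s$.
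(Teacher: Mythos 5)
Your proposal is correct and follows essentially the same route as the paper's proof: deduce an upper bound $M(c)$ on the densities from the Sobolev/energy estimate, bound $T'$ and then $T''$ via the Monge--Amp\`ere equation and its derivative, read off the lower bound on $u_s$ from the geodesic formula, and bound $E(\mu_s)$ by differentiating that formula and changing variables. Your two refinements --- the harmonic-mean observation giving $\hat m = m$ directly, and the explicit substitution $y=T(x)$ to handle the composed term $u_1'(T(x))$ in the $L^2$ bound for $T''$ (a point the paper passes over silently) --- are welcome but do not change the argument.
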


\begin{proof}
By $C^{0,1/2}$ continuity of the densities, there exists $M=M(c)$ such that $u_0(x),u_1(x)<M$ for all $x\in S^1$. Let $T:S^1 \longrightarrow S^1$ be the the optimal transport map between $u_0$ and $u_1$. By Monge–Ampère equation (\ref{Monge}) we have
\[ \left|  T'(x) \right| \leqslant \dfrac{M}{m} \]
By taking the derivative of Monge–Ampère equation (\ref{Monge}) we have
\begin{equation}\label{h001}
\begin{aligned}
|T''(x)|&=\left|\dfrac{u_0'(x)u_1(T(x))-u_0(x)T'(x)u_1'(T(x))}{u_1(x)^2}\right| \\
&\leqslant \dfrac{M}{m^2}|u'_0(x)| + \dfrac{M^2}{m^3} |u'_1(x)|. 
\end{aligned}
\end{equation}
Now let $\mu_s$ be the geodesic connecting $\mu_0$ to $\mu_1$. We have
\begin{equation}\label{equ:interpo}
u_s((1-s)x+sT(x))=\dfrac{u_0(x)}{(1-s)+sT'(x)}.
\end{equation}
Plugging in bounds on $T'$ yields
\begin{equation*}
u_s(x) \geqslant \dfrac{m^2}{M}.
\end{equation*}
Hence $u_s>\hat{m}$ where $\hat{m}=\hat{m}(c,m)$. Taking derivative of the equation (\ref{equ:interpo}) we have
\begin{equation*}
u'_s((1-s)x+sT(x))=\dfrac{u'_0(x)[(1-s)+sT'(x)]-su_0(x)T''(x)}{((1-s)+sT'(x))^3}
\end{equation*}
Using the bounds on $T'$ and (\ref{h001}) we have:
\begin{equation*}
\begin{aligned}
|u'_s((1-s)+sT'(x))| &\leqslant \dfrac{|u'_0(x)|}{|(1-s)+sT'(x)|^2} + \dfrac{|u_0(x)||T''(x)|}{|(1-s)+sT'(x)|^3}\\
&\leqslant (\dfrac{M}{m})^2 |u'_0(x)| + (\dfrac{M}{m})^5|u'_0(x)| + (\dfrac{M}{m})^6|u'_1(x)|.
\end{aligned}
\end{equation*}
Taking integral from both sides yields $E(u_s)<\hat{c}$ where $\hat{c}=\hat{c}(c,m)$.
\end{proof}\\


The idea of the following lemma was suggested by my supervisor Almut Burchard. This lemma will be used in calculations of the second derivative of the energy in Theorem \ref{theo:Uniform con.}. 
 
\begin{lemma}[Interpolation inequality.]\label{Maximum bound}
For every $\alpha \in \R$ there exists a constant $\lambda <0 $ such that
\[ |f''|_{L^2}  - \alpha |f'^2|_{\infty} - \lambda |f|_{L^2} \geqslant 0   ~~~ \forall f \in C^{\infty}(S^1).\]
\end{lemma}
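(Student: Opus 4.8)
The plan is to establish the interpolation estimate underlying the statement: controlling the sup-norm of $f'$ by the $L^2$-norm of $f''$ together with a \emph{large} multiple of the $L^2$-norm of $f$. If $\alpha\leqslant 0$ there is nothing to prove --- every term on the left is then nonnegative for any $\lambda<0$, so take $\lambda=-1$ --- so assume $\alpha>0$. It suffices to produce $\lambda<0$ such that
\[
\alpha\,\bigl|(f')^2\bigr|_\infty\ \leqslant\ \int_{S^1}(f'')^2\,dx+|\lambda|\int_{S^1}f^2\,dx\qquad\forall f\in C^\infty(S^1).
\]
The crucial structural fact --- and the reason the estimate can only hold on $S^1$ --- is that $f'$ has zero mean: $\int_{S^1}f'\,dx=0$. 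On $\R$ no such $\lambda$ exists, as the scaling $f\mapsto tf$, $t\to\infty$, shows; and even on $S^1$ one cannot bound $|f'|_\infty^2$ by $\int_{S^1}(f'')^2\,dx$ alone with a uniformly small constant, only by paying a large coefficient on the lower-order term.

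I would run the argument through Fourier series on $S^1=\R/\Z$. Writing $f=\sum_{n\in\Z}c_ne^{2\pi i n x}$, Parseval gives $\int_{S^1}f^2\,dx=\sum_n|c_n|^2$ and $\int_{S^1}(f'')^2\,dx=\sum_n(2\pi n)^4|c_n|^2$, while zero mean of $f'$ removes the $n=0$ mode from $f'$. Since $f$ is smooth the derivative series is absolutely convergent, so $|f'|_\infty\leqslant\sum_{n\neq 0}2\pi|n|\,|c_n|$. For a parameter $C>0$, Cauchy--Schwarz against the weight $(2\pi n)^4+C$ yields
\[
|f'|_\infty^2\ \leqslant\ \Bigl(\sum_{n\neq 0}\frac{(2\pi n)^2}{(2\pi n)^4+C}\Bigr)\Bigl(\sum_{n\neq 0}\bigl((2\pi n)^4+C\bigr)|c_n|^2\Bigr)\ \leqslant\ S(C)\Bigl(\int_{S^1}(f'')^2\,dx+C\int_{S^1}f^2\,dx\Bigr),
\]
where $S(C):=\sum_{n\neq 0}(2\pi n)^2/\bigl((2\pi n)^4+C\bigr)$.

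To conclude, note $S(C)\leqslant\sum_{n\neq 0}(2\pi n)^{-2}=\tfrac1{12}<\infty$, so $S(C)$ is finite, and $S(C)\to 0$ as $C\to\infty$ by dominated convergence (each summand decreases to $0$, with summable majorant $(2\pi n)^{-2}$). Hence for the given $\alpha>0$ pick $C=C(\alpha)$ with $\alpha\,S(C)\leqslant 1$; then $\alpha\,|f'|_\infty^2\leqslant\int_{S^1}(f'')^2\,dx+C\int_{S^1}f^2\,dx$, and $\lambda:=-C<0$ works. One can also avoid Fourier entirely: $f'$ is continuous with mean zero, hence vanishes at some $y_0$, so $(f'(x))^2=\int_{y_0}^x 2f'f''\leqslant 2\|f'\|_{L^2}\|f''\|_{L^2}$; combined with the periodic identity $\|f'\|_{L^2}^2=-\int_{S^1}ff''\leqslant\|f\|_{L^2}\|f''\|_{L^2}$ this gives $|f'|_\infty^2\leqslant 2\|f''\|_{L^2}^{3/2}\|f\|_{L^2}^{1/2}$, and Young's inequality with a small parameter absorbs the right-hand side into $\|f''\|_{L^2}^2+C\|f\|_{L^2}^2$.

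The main point to watch --- rather than a genuine obstacle --- is the homogeneity bookkeeping: the coefficient of $\int_{S^1}(f'')^2\,dx$ must stay at most $1$ while all the slack is shifted onto the $\int_{S^1}f^2\,dx$ term with a large constant, which is exactly what letting $C$ (equivalently $-\lambda$) be large accomplishes; the zero-mean property of $f'$ is what makes the weighted series $S(C)$ converge and tend to $0$ in the first place.
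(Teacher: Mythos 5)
Your proof is correct, and it lands on the same basic strategy as the paper --- Fourier series on $S^1$, an interpolation of $|f'|_\infty$ between $|f''|_{L^2}$ and $|f|_{L^2}$, and absorption of all the slack into a large constant on the lowest-order term --- but the execution differs in two worthwhile ways. The paper first proves the multiplicative Gagliardo--Nirenberg-type bound $|f'|_\infty \leqslant d\,|f''|_{L^2}^{4/5}|f|_{L^2}^{1/5}$ via a three-factor H\"older inequality with exponents $10/4$, $10$, $2$ applied to $|k^4a_k^2|^{4/10}|a_k^2|^{1/10}|1/k|^{6/10}$, and only then converts to additive form by the arithmetic--geometric inequality with a tuned parameter $\beta$; your weighted Cauchy--Schwarz against $(2\pi n)^4+C$ produces the additive inequality in a single step, with the absorption mechanism made transparent by $S(C)\to 0$. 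Your Fourier-free alternative (using that $f'$ has a zero to get $|f'|_\infty^2\leqslant 2|f'|_{L^2}|f''|_{L^2}$, then integrating by parts and applying Young) is genuinely more elementary and makes explicit why the result is special to the circle. One further point in your favor: you prove the inequality with all norms squared, $|f''|_{L^2}^2-\alpha|f'|_\infty^2-\lambda|f|_{L^2}^2\geqslant 0$, which is the form actually invoked in (\ref{w02}) in the proof of Theorem \ref{the:convexity}; the paper's own proof, read literally, concludes with the unsquared $\alpha|f'|_\infty\leqslant |f''|_{L^2}-\lambda|f|_{L^2}$, so your version resolves a small internal inconsistency between the lemma's statement, its proof, and its application. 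You also dispose of $\alpha\leqslant 0$ explicitly, which the paper leaves implicit.
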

\begin{proof}
Consider the Fourier expansion $f(x)=\sum_{k \in \mathbb{Z}} a_k e^{i 2k \pi x} $. We have
\begin{equation*}
|f'|_{\infty} = \sup_{x \in S^1} |\sum_{k \in \mathbb{Z}} i2k \pi a_k e^{i 2k \pi x} | \leqslant
2 \pi \sum_{k \in \mathbb{Z}} | k a_k | = 2\pi \sum_{k \in \mathbb{Z}} |k^4 a^2_k|^{\frac{4}{10}} |a^2_k|^{\frac{1}{10}}|\dfrac{1}{k}|^{\frac{6}{10}}.
\end{equation*}
Hölder's inequality with exponents $\frac{4}{10}$, $\frac{1}{10}$, and $\frac{5}{10}$ yields
\begin{equation*}
|f'|_{\infty} \leqslant 2\pi \left( \sum_{k \in \mathbb{Z}} (|k^4 a^2 _k| \right)^{\frac{4}{10}} \left( \sum_{k \in \mathbb{Z}}
|a^2 _k|\right)^{\frac{1}{10}} \left( \sum_{k \in \mathbb{Z}} |\dfrac{1}{k}|^{\frac{6}{5}}\right)^{\frac{1}{2}}.
\end{equation*}
The term $2\pi (\sum_{k \in \mathbb{Z}}|\dfrac{1}{k}|^{\frac{6}{5}})^{1/2}=d$ is a constant independent of $a_k$. Therefore
\begin{equation*}
|f'|_{\infty} \leqslant d |f''|_{L^2} ^{4/5} |f|_{L^2} ^{1/5}.
\end{equation*}
By the arithmetic-geometric inequality for a constant $\beta$ we have 
\begin{align*}
|f'|_{\infty} &\leqslant d |f''|_{L^2} ^{4/5} |f|_{L^2} ^{1/5}\\
&= d( {\beta}^{5/4}|f''|_{L^2})^{4/5} ({\beta}^{-5} |f|_{L^2})^{1/5}\\
&\leqslant \dfrac{4d}{5} {\beta}^{5/4} |f''|_{L^2} + \dfrac{d}{5} {\beta}^{-5} |f|_{L^2}.
\end{align*}
Putting $\beta=(\frac{5}{4}\alpha d)^{\frac{-4}{5}}$ and $\lambda = -\dfrac{d \alpha {\beta}^{-5}}{5}$ yields
\[ \alpha |f'^2|_{\infty} \leqslant |f''|_{L^2} - \lambda |f|_{L^2} .\]
\end{proof}\\


We are now ready to prove the main theorem of this section which shows that the Dirichlet energy is restricted $\lambda$-convex at positive measures.

\begin{theorem}[restricted $\lambda$-convexity of the Dirichlet energy.]\label{the:convexity}
Let $\mu=udx$ be a measure with $E(u)< c<+\infty$ and $u> m>0$. Then $\exists \lambda=\lambda_{c,m}$ such that $E$ is restricted $\lambda$-convex at $\mu$.
\end{theorem}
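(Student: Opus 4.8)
The plan is to assemble the pieces prepared in Lemmas \ref{theo:Uniform con.}, \ref{lem smooth app.}, \ref{lem interpolant regularity}, and \ref{Maximum bound}, reducing the statement to a single quantitative estimate on the second derivative of the energy along a geodesic. First I narrow the class of measures I must treat. By Lemma \ref{theo:Uniform con.}, on $E_c$ one has $W_2^2(\nu,\mu)\geqslant\alpha(c)|v-u|_\infty^{\beta}$, so there is $\delta=\delta(c,m)>0$ such that every $\nu=vdx\in B_\delta(\mu)\cap E_c$ satisfies $v>m/2$ on $S^1$ (here I use $u>m$). By Lemma \ref{lem smooth app.} it then suffices to prove that $E$ is $\lambda$-convex along the geodesic joining any pair of \emph{smooth} measures $\mu_0=u_0dx,\mu_1=u_1dx\in B_\delta(\mu)\cap E_c$; these endpoints are smooth with $u_0,u_1>m/2$. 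By the regularity theory for optimal maps on $S^1$ cited above, the geodesic $\mu_s=u_sdx$ between them is smooth, and Lemma \ref{lem interpolant regularity} (applied with the lower bound $m/2$) produces constants $\hat m=\hat m(c,m)>0$ and $\hat c=\hat c(c,m)<+\infty$ with $u_s>\hat m$ and $E(u_s)<\hat c$ for all $s\in[0,1]$. Since everything is smooth, it is enough to verify the derivative form (\ref{def:smooth conv along curve}), namely $\frac{d^2}{ds^2}E(u_s)\geqslant\lambda W_2^2(\mu_0,\mu_1)$ for every $s$, with $\lambda=\lambda(\hat c,\hat m)$.

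Second, I reduce this to an estimate at the \emph{left endpoint} of a geodesic. A Wasserstein geodesic restricted to a subinterval is again a constant-speed geodesic after affine reparametrization, and $W_2(\mu_{s_0},\mu_1)=(1-s_0)W_2(\mu_0,\mu_1)$; hence, writing $\nu_r:=\mu_{s_0+r(1-s_0)}$ for the geodesic from $\mu_{s_0}$ to $\mu_1$ on $[0,1]$, we have $\frac{d^2}{ds^2}E(u_s)\big|_{s=s_0}=(1-s_0)^{-2}\frac{d^2}{dr^2}E(\nu_r)\big|_{r=0}$. So it is enough to show: for any smooth geodesic $\sigma_r=w_rdx$, $r\in[0,1]$, whose left endpoint satisfies $w_0>\hat m$ and $E(w_0)<\hat c$, one has $\frac{d^2}{dr^2}E(\sigma_r)\big|_{r=0}\geqslant\lambda\,W_2^2(\sigma_0,\sigma_1)$ with $\lambda=\lambda(\hat c,\hat m)$. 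Applying this to $\sigma=\nu$ and dividing by $(1-s_0)^2$ gives the desired inequality at every $s_0\in[0,1)$, and $s_0=1$ follows by continuity (or by reversing the geodesic).

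Third — and this is the heart of the matter — I prove the endpoint estimate. Let $f=T_{\sigma_0}^{\sigma_1}-Id$, a smooth tangent field at $\sigma_0$ with $\int_{S^1}f^2 w_0\,dx=W_2^2(\sigma_0,\sigma_1)$. By the computation in (\ref{SecondDerivative}), $\frac{d^2}{dr^2}E(\sigma_r)\big|_{r=0}=2\int_{S^1}\{(f''w_0)^2+8(f''w_0)(f'w_0')+6(f'w_0')^2\}\,dx$. Completing the square, the integrand equals $(f''w_0+4f'w_0')^2-10(f'w_0')^2$, and since $(a+b)^2\geqslant\frac12 a^2-b^2$ it is $\geqslant\frac12(f''w_0)^2-26(f'w_0')^2$. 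Therefore $\frac{d^2}{dr^2}E(\sigma_r)\big|_{r=0}\geqslant\int(f''w_0)^2\,dx-52\int(f'w_0')^2\,dx\geqslant\hat m^2\|f''\|_{L^2}^2-104\,\hat c\,|f'^2|_\infty$, where I used $w_0>\hat m$ and $\int(w_0')^2\,dx=2E(w_0)<2\hat c$. The indefinite terms have now been converted into a positive multiple of $\|f''\|_{L^2}^2$ plus a term governed by $|f'^2|_\infty$; applying the interpolation inequality $|f'|_\infty\leqslant d\,\|f''\|_{L^2}^{4/5}\|f\|_{L^2}^{1/5}$ from the proof of Lemma \ref{Maximum bound} together with Young's inequality gives $|f'^2|_\infty\leqslant\eta\|f''\|_{L^2}^2+C_\eta\|f\|_{L^2}^2$; choosing $\eta$ with $104\,\hat c\,\eta\leqslant\hat m^2$ absorbs the bad term and leaves $\frac{d^2}{dr^2}E(\sigma_r)\big|_{r=0}\geqslant-C\|f\|_{L^2}^2$ with $C=C(\hat c,\hat m)$. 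Finally $\|f\|_{L^2}^2=\int f^2\,dx\leqslant\hat m^{-1}\int f^2 w_0\,dx=\hat m^{-1}W_2^2(\sigma_0,\sigma_1)$, so the estimate holds with $\lambda:=-C/\hat m<0$, depending only on $c$ and $m$. (Alternatively, one may exploit the scaling invariance $f\mapsto tf$ of both sides and argue directly on $\|f\|_{L^2}=1$.)

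The main obstacle is precisely this last estimate: the cross term $8(f''w_0)(f'w_0')$ is indefinite, and one cannot bound $|f'^2|_\infty$ — nor $\int(f'w_0')^2$ — by $\|f\|_{L^2}^2$ alone, since high-frequency perturbations make $|f'|_\infty/\|f\|_{L^2}$ arbitrarily large; this is exactly the mechanism behind the non-convexity example of Section 3.1. Two structural inputs rescue the argument: the lower bound $w_0>\hat m$, which upgrades the leading square $(f''w_0+4f'w_0')^2$ to genuine control of $\|f''\|_{L^2}^2$, and the finiteness of the energy, entering through $\int(w_0')^2<2\hat c$, which combined with the interpolation inequality lets $\|f''\|_{L^2}^2$ dominate $|f'^2|_\infty$ up to a lower-order term. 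Everything else is bookkeeping: the affine reparametrization of geodesics, and the reductions to smooth, positive, energy-bounded data furnished by Lemmas \ref{lem smooth app.}, \ref{lem interpolant regularity}, and \ref{theo:Uniform con.}.
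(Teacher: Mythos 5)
Your proposal is correct and follows essentially the same route as the paper: reduce to smooth, positive, energy-bounded endpoints via Lemmas \ref{theo:Uniform con.} and \ref{lem smooth app.}, propagate the bounds along the geodesic via Lemma \ref{lem interpolant regularity}, complete the square in the second-derivative formula (\ref{SecondDerivative}), use the lower bound on the density and the energy bound to reduce to $|f''|_{L^2}^2-\alpha|f'|_\infty^2-\lambda|f|_{L^2}^2\geqslant 0$, and conclude with the interpolation inequality of Lemma \ref{Maximum bound}. The one point where you are more explicit than the paper is the affine reparametrization converting the left-endpoint estimate into the bound $\frac{d^2}{ds^2}E(u_s)\big|_{s=s_0}\geqslant\lambda W_2^2(\mu_0,\mu_1)$ at every interior $s_0$ -- a step the paper leaves implicit when it asserts $\hat\lambda$-convexity along the whole geodesic.
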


\begin{proof}
We first claim that the second derivative of the energy at a positive measure $\nu$ is uniformly bounded from below along any smooth vector field. Let $\nu=vdx$ be a measure with $E(v)<c$ and $v>m$. Let $\nu_1$ be another smooth measure and let $f$ be the vector field defining the geodesic  $\nu_s=(Id+sf)_{\#}\nu$ that connects $\nu$ to $\nu_1$. By (\ref{SecondDerivative}) we have
\begin{equation*}
\left. \dfrac{d^2 E(\nu_s)}{ds^2}\right|_{s=0} = 2\int_{S^1}  (f'' v)^2 + 8 (v f'') (v' f' ) +  6 (v' f' )^2 dx.
\end{equation*}
Recall that $W_2^2(\nu,\nu_1)=\int_{S^1}f(x)^2 v(x)dx$. By (\ref{smooth convexity}) the energy is $\lambda$-convex at $v$, if for all such vector fields 
\begin{equation*}
2\int_{S^1}  (v f'')^2 + 8 (v f'') (v' f' ) +  6 (v' f' )^2 dx -\lambda \int_{S^1}v f^2 dx \geqslant 0.
\end{equation*}
By completing the squares we have 
\begin{small}
\begin{equation*}
2\int_{S^1} \lbrace (v f'')^2 + 8 (v f'') (v' f' ) +  6 (v' f' )^2 \rbrace dx -\lambda \int_{S^1}v f^2 dx \geqslant \int_{S^1} \lbrace f''^2 v^2 - 52 f'^2 v'^2 \rbrace dx -\lambda \int_{S^1}v f^2 dx.
\end{equation*}
\end{small}
The lower bound on the density $v>m$ yields
\begin{equation*}
\int_{S^1} \lbrace f''^2 v^2 - 52 f'^2 v'^2 \rbrace dx -\lambda \int_{S^1}v f^2 dx \geqslant  m^2 \int_{s^1} f''^2 dx - 52 \int_{s^1} f'^2 u'^2 dx - m \lambda \int_{s^1} f^2 dx.
\end{equation*}
Hölder's inequality and energy bound $E(u)<c$ imply
\begin{equation*}
m^2 \int_{s^1} f''^2 dx - 52 \int_{s^1} f'^2 u'^2 dx - m \lambda \int_{s^1} f^2 dx \geqslant m^2 \int_{s^1} f''^2 dx - 52 c |f'^2 |_{\infty} - m \lambda \int_{s^1} f^2 dx.
\end{equation*}
By reordering and absorbing the constants in $\lambda$, the energy is $\lambda$-convex along $\nu_s$ at $\nu$ if $\forall f \in C^{\infty}(S^1)$ we have
\begin{equation}\label{w02}
|f''|_{L^2}^2 - \alpha |f'|_{\infty}^2 - \lambda |f|_{L^2}^2 \geqslant 0
\end{equation}
where $\alpha=\frac{52 c}{m^2}$. By Lemma \ref{Maximum bound} the claim has been proved.\\

Now consider the energy sub-level set $E_c$. By Theorem \ref{theo:Uniform con.} Wasserstein convergence implies uniform convergence on $E_c$. Therefore there exists a $\delta=\delta_c$ such that we have $v > m$ for all $\nu=vdx \in E_c \cap B_{\delta}(\mu)$. Assume that $\nu_0 , \nu_1 \in E_c \cap B_{\delta}(\nu)$. Let $\nu_s$ be the geodesic connecting $\nu_0$ to $\nu_1$. By Lemma \ref{lem interpolant regularity} there exist $\hat{m}$ and $\hat{c}$ depending only on $c$ and $m$ such that $E(\nu_s)< \hat{c} < +\infty$ and $v_s> \hat{m}>0$. By the argument at the beginning of the proof there exists a $\hat{\lambda}=\hat{\lambda}_{m,c}$ such that $E$ is $\hat{\lambda}$-convex along the geodesic $\nu_s$. The constant $\hat{\lambda}$ is uniform for all pairs of smooth measures inside $E_c\cap B_{\delta}(\mu)$. Therefore, by Lemma \ref{lem smooth app.} $E$ is restricted $\hat{\lambda}$-convex at $\mu$.
\end{proof}\\

\begin{corollary}
The gradient flow trajectory of the Dirichlet energy on $S^1$ with a positive initial data exists and is unique at least for a short period of time.
\end{corollary}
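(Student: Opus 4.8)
The plan is to obtain the corollary as an immediate consequence of Theorem~\ref{theorem1} applied to the Dirichlet energy on $S^1$: all of the analytic work has already been carried out, so what remains is simply to record that a positive initial datum meets every hypothesis of the abstract existence theorem. First I would fix the datum. Here ``positive initial data'' should be read as a measure $\mu=u\,dx$ of finite Dirichlet energy, $E(\mu)=:c_0<+\infty$ (equivalently $u\in H^1(S^1)$ by (\ref{Dirichlet energy})), with $u>0$ everywhere. Since $H^1(S^1)\hookrightarrow C^{0,1/2}(S^1)$ and $S^1$ is compact, such a $u$ is continuous and hence bounded below: $u>m$ for some $m>0$. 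Fix any $c$ with $c_0<c<+\infty$.

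Next I would verify the standing hypotheses of Section~2.2 for $E$ the Dirichlet energy on $S^1$. Nonnegativity is built into the definition. The sublevel sets $E_c$ are in fact compact for $W_2$: the Poincar\'e estimate in the proof of Lemma~\ref{theo:Uniform con.} bounds the $H^1$-norm of every density in $E_c$ by a constant depending only on $c$, so $E_c$ is precompact in $C^0(S^1)$ by Rellich--Kondrachov and hence precompact for $W_2$ by the equivalence of the uniform and Wasserstein topologies on sublevel sets (Lemma~\ref{theo:Uniform con.}), while closedness follows from lower semicontinuity (Lemma~\ref{lem:l.s.c}); in particular they are locally compact. Lower semicontinuity under $W_2$ (equivalently, on $S^1$, under narrow convergence) is exactly Lemma~\ref{lem:l.s.c}. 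Finally $D(E)\subseteq\mathcal{P}_2^a(S^1)$, since a finite-energy measure has an $H^1$, hence $L^1$, density. The one remaining hypothesis of Theorem~\ref{theorem1}, namely restricted $\lambda$-convexity at $\mu$, is precisely the conclusion of Theorem~\ref{the:convexity} for our $c$ and $m$, which supplies a constant $\lambda=\lambda_{c,m}$.

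With all hypotheses in place, Theorem~\ref{theorem1} yields $\hat t>0$ and a unique curve $\mu_t\in AC^2([0,\hat t];\mathcal{P}_2^a(S^1))$ that is the gradient flow of $E$ starting from $\mu$; by the computation of Section~2.2 (equation (\ref{v05}) with $\delta E/\delta u=-\partial_x^2 u$) this curve solves the thin-film equation for as long as it lasts. The only point that genuinely needs care is that Theorem~\ref{theorem1} is stated on $\mathcal{P}_2(\mathbb{R}^m)$ rather than on the manifold $S^1$; this is handled by the reduction described at the beginning of Section~3.1 --- representing optimal maps on an interval and replacing the Euclidean inner product by the one on $TS^1$ --- and the existence, regularity and uniqueness of optimal transport maps between positive measures on $S^1$ that this reduction uses are classical \cite{McCannKim,Loeper}. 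So I do not anticipate a real obstacle: once Theorems~\ref{theorem1} and~\ref{the:convexity} are granted, the corollary is a matter of matching hypotheses.
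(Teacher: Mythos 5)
Your proposal is correct and follows exactly the route the paper intends: the corollary is an immediate consequence of Theorem \ref{the:convexity} (restricted $\lambda$-convexity at positive finite-energy measures) combined with Theorem \ref{theorem1}, with the standing hypotheses supplied by Lemmas \ref{theo:Uniform con.} and \ref{lem:l.s.c} and the $S^1$ reduction of Section 3.1. Your verification of the hypotheses is more explicit than the paper's (which leaves the corollary without a written proof), but the argument is the same.
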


\begin{corollary}
The positive periodic solutions of the thin-film equation $\partial_t u = - \partial_x (u \partial_x^3 u)$ are locally well-posed.
\end{corollary}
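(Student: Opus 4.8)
The plan is to deduce the corollary directly from Theorem~\ref{the:convexity} and Theorem~\ref{theorem1}, and then to translate the resulting abstract gradient-flow trajectory into a statement about the thin-film PDE.

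First I would fix the admissible class of initial data: a probability density $u_0$ on $S^1$ with $E(u_0)<+\infty$ and $u_0>0$. Since finite Dirichlet energy forces $u_0\in H^1(S^1)\hookrightarrow C^{0,1/2}(S^1)$, positivity together with compactness of $S^1$ gives a uniform lower bound $u_0\ge m>0$, so Theorem~\ref{the:convexity} applies and $E$ is restricted $\lambda$-convex at $\mu_0=u_0\,dx$ for some $\lambda=\lambda_{c,m}$. Before invoking Theorem~\ref{theorem1} I would check its structural hypotheses for the Dirichlet energy: nonnegativity is clear; lower semicontinuity under $W_2$ (equivalently, by (\ref{narrow}), under narrow convergence of sequences with bounded second moment) is Lemma~\ref{lem:l.s.c}; local compactness of the sub-level sets follows from the $H^1$ bound $\|v\|_{H^1}^2\le c+3$ valid on $E_c$ together with Rellich's theorem and Lemma~\ref{theo:Uniform con.}; and $D(E)\subseteq\mathcal{P}_2^a(S^1)$ is immediate because an $H^1$ density is continuous, hence gives an absolutely continuous measure. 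Theorem~\ref{theorem1} then produces $\hat t>0$ and a unique curve $\mu_t=u_t\,dx\in AC^2([0,\hat t];\mathcal{P}_2^a(S^1))$ which is the gradient flow of $E$ starting from $\mu_0$.

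Next I would identify this curve with a solution of the thin-film equation and record positivity. Along the flow the energy is dissipated, so $E(u_t)\le E(u_0)<c$; since $\mu_t$ is $W_2$-continuous it stays in $B_\delta(\mu_0)\cap E_c$ on a (possibly shorter) interval, and then Lemma~\ref{theo:Uniform con.} forces $\|u_t-u_0\|_\infty$ small, hence $u_t\ge m/2>0$ on $[0,\hat t]$ --- this is exactly the mechanism that keeps the trajectory inside the restricted-convexity domain, as in the proof of Theorem~\ref{theorem1}. For the PDE, the steepest-descent condition in (\ref{equ:flow}) together with the first-variation identity (\ref{v04}) gives $V_t=-\partial_x\!\left(\tfrac{\delta E}{\delta u}\right)=\partial_x^3 u_t$, since $\tfrac{\delta E}{\delta u}=-\partial_x^2 u$ for the Dirichlet energy; feeding this into the continuity equation gives, as in the derivation of (\ref{v05}), the thin-film equation $\partial_t u_t=-\partial_x(u_t\,\partial_x^3 u_t)$. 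I would read this in the weak sense inherited from the continuity equation: for $\varphi\in C_c^\infty((0,\hat t)\times S^1)$ one has $\int\!\!\int(\partial_t\varphi)\,u_t\,dx\,dt=\int\!\!\int(\partial_x\varphi)\,u_t V_t\,dx\,dt$ with $V_t\in L^2(d\mu_t)$ and $\int_0^{\hat t}\!\int u_t|V_t|^2\,dx\,dt<\infty$ (the metric-speed bound from $\mu_t\in AC^2$), which is the notion of weak solution used in \cite{Bertozzi}.

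Finally, well-posedness. Existence and uniqueness are given by Theorem~\ref{theorem1}; for continuous dependence I would reuse the uniqueness computation there verbatim: for two positive data $u_0^1,u_0^2$ lying in a common restricted-convexity domain, the same chain of estimates yields $W_2(u_t^1,u_t^2)\le e^{-\lambda t}\,W_2(u_0^1,u_0^2)$ on $[0,\hat t]$, and by Lemma~\ref{theo:Uniform con.} this upgrades to stability in the uniform norm on the energy sub-level set. The main obstacle is the second step: finiteness of the Dirichlet energy only yields $H^1$ regularity, so $u\,\partial_x^3 u$ has no classical meaning, and one must be careful to phrase the equation weakly, to confirm that the abstract subdifferential element supplied by Theorem~\ref{theorem1} really equals $\partial_x^3 u_t$ distributionally, and to check that the weak solution so obtained agrees with the notion of solution in the existing theory; the remaining steps are routine verifications of hypotheses.
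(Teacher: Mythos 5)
Your proposal is correct and follows essentially the same route as the paper, which states this corollary as an immediate consequence of Theorem~\ref{the:convexity} combined with Theorem~\ref{theorem1} and the formal identification (\ref{v04})--(\ref{v05}) of the gradient flow of the Dirichlet energy with the thin-film equation. You fill in details the paper leaves implicit --- verification of the structural hypotheses of Theorem~\ref{theorem1}, the weak reading of $\partial_t u=-\partial_x(u\,\partial_x^3u)$ when $u_t$ is only $H^1$, and positivity preservation via Lemma~\ref{theo:Uniform con.} --- but the underlying argument is the same.
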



\section{Other classes of equations}

In this section, we show that the theory developed in the last two sections can be applied to a wide class of energy functionals and evolution equations of higher order and different forms. Note that the result of Theorem \ref{theorem1} is general and it can be applied to any energy functional, provided that it is restricted $\lambda$-convex. The corresponding lemmas from Section 3 for the energies studied here can be derived in a similar fashion with minor modifications. Hence, we discuss the proofs only briefly.\\


\subsection{Higher order equations}
The family that we study here is of the form $E(u)=\frac{1}{2}\int_{S^1}|u^{(k)}|^2dx$ for $k \in \mathbb{N}$. The flow of this family of energies  corresponds to the solution of the higher order non-linear equations of the form $\partial_t u =(-1)^k \partial_x(u\partial_x^{2k+1}u)$.\\

Consider $u \in D(E)$. Finiteness of $|u|_{H^k}$ in particular implies that the $H^1$-norm of $u$ is bounded. Since we only used the $H^1$-norm bounds in Lemmas \ref{theo:Uniform con.}, \ref{lem:l.s.c}, and \ref{lem smooth app.}, they automatically follow for this class of energies. Therefore, Wasserstein and uniform convergence are equivalent on energy sub-level sets, $E$ is lower semi continuous, and one can use approximation by smooth functions to study convexity.\\

In Lemma \ref{lem interpolant regularity} we derived bounds on $T''$ by taking derivatives of the explicit formula of $T'$ given by the Monge–Ampère equation. In the same fashion, one can find bounds on higher derivatives of the optimal map by taking more derivatives of the Monge–Ampère equation. For generalization of Lemma \ref{Maximum bound}, we have to show that $\forall \alpha ~ \exists \lambda$ such that
\[|f^{(m+1)}|_{L^2}^2 -\alpha |f^{(m)}|_{\infty}^2 -\lambda |f|_{L^2}^2 \geqslant 0\]
for every smooth vector field $f$. By induction assume that for any $\alpha_m>0$ there exists $\lambda_m \leqslant 0$ such that
\begin{equation}\label{FFF}
|f^{(m)}|_{\infty}^2 \leqslant \frac{1}{\alpha_m}|f^{(m+1)}|_{L^2}^2 - \lambda_m|f|_{L^2}^2.
\end{equation}
Let $\alpha_{m+1}>0$ be given. By applying Lemma \ref{Maximum bound} to $f^{(m+1)}$, there exists a $\hat{\lambda} \leqslant 0$ such that
\begin{equation}\label{u01}
|f^{(m+1)}|_{\infty}^2\leqslant \dfrac{1}{2\alpha_{m+1}}|f^{(m+2)}|_{L^2}^2 - \hat{\lambda}|f^{(m)}|_{L^2}^2.
\end{equation}
Put $\alpha_{m}= -2\hat{\lambda}$, by (\ref{FFF}) there exists $\lambda_m \leqslant 0$ such that
\begin{equation*}
|f^{(m)}|_{\infty}^2 \leqslant \frac{-1}{2\hat{\lambda}}|f^{(m+1)}|_{L^2}^2 - \lambda_m|f|_{L^2}^2.
\end{equation*}
$|f^{(m)}|_{L^2}^2 \leqslant |f^{(m)}|_{\infty}^2$ on $\R/\Z$. Therefore
\begin{equation*}
-\hat{\lambda}|f^{(m)}|_{L^2}^2 \leqslant \frac{1}{2}|f^{(m+1)}|_{L^2}^2 +  \hat{\lambda} \lambda_m|f|_{L^2}^2.
\end{equation*}
Plugging into (\ref{u01}) yields
\begin{equation*}
|f^{(m+1)}|_{\infty}^2 \leqslant \dfrac{1}{2\alpha_{m+1}}|f^{(m+2)}|_{L^2}^2 + \dfrac{1}{2}|f^{(m+1)}|_{L^2}^2 + \hat{\lambda} \lambda_m|f|_{L^2}^2.
\end{equation*} 
Therefore
\[ |f^{(m+1)}|_{\infty}^2 - \dfrac{1}{2}|f^{(m+1)}|_{L^2}^2 \leqslant \dfrac{1}{2\alpha_{m+1}}|f^{(m+2)}|_{L^2}^2 + \hat{\lambda} \lambda_m|f|_{L^2}^2. \]
By $|f^{(m+1)}|_{L^2}^2 \leqslant |f^{(m+1)}|_{\infty}^2$ and by setting $\lambda_{m+1}=\frac{-1}{2} \hat{\lambda} \lambda_m$, we have
\begin{equation}\label{GGG}
\forall \alpha_{m+1}>0 ~~ \exists \lambda_{m+1} \leqslant 0 ~~~s.t.~~~ |f^{(m+1)}|_{\infty}^2 \geqslant \frac{1}{\alpha_{m+1}}|f^{(m+2)}|_{L^2}^2 - \lambda_{m+1}|f|_{L^2}^2.
\end{equation}

In conclusion, all the Lemmas in the previous section can be applied to higher order energies. We now study convexity of the energies along smooth vector fields on a measure $\mu=udx$ with positive density $u>m$ and finite energy $E(u)<c<\infty$.
\begin{equation*}
\begin{aligned}
\dfrac{d^2}{ds^2} |_{s=0} E(u_s) &= \dfrac{d^2}{ds^2} |_{s=0} \int_{S^1}(\partial_y^k u_s(y))^2 dy \\
&= \dfrac{d^2}{ds^2} |_{s=0} \int_{S^1} \lbrace (\dfrac{\partial x}{\partial y}\dfrac{\partial }{\partial x})^k \dfrac{u(x)}{1+sf'(x)} \rbrace^2 \frac{\partial y}{\partial x} dx  .
\end{aligned}
\end{equation*}
Since we study all the different orders at the same time, we consider the general form given by a polynomial $P$ which is determined by the order of the energy. We have
\[ \dfrac{d^2}{ds^2} |_{s=0} E(u_s)= \int_{S^1} |uf^{(k)}|^2 + P(u,u^{(1)},...,u^{(k)};f,f^{(1)},...,f^{(k-1)}) dx\]
where $P$ is of order at most 2 with respect to each of its entries, and the order of the derivative of each term in $P$ is at most $k$. At a measure with positive density and finite energy, we have $u>m$ and $|u^{(i)}|_{\infty}< M$ for all $i < k$ where $M$ depends only on $E(u)$. Also $|f^{(i)}|_{L^2} \leqslant |f^{(k-1)}|_{\infty}$ for all $i<k-1$. Therefore similar to the calculation of Theorem \ref{theorem1}, for positive constants $\beta_1, \beta_2$ we have
\[ \dfrac{d^2}{ds^2} |_{s=0} E(u_s) \geqslant \beta_1 |f^{(k)}|_{L^2}^2 - \beta_2 |f^{(k-1)}|_{\infty}^2. \] 
Therefore $E$ is convex at $u$ if we can find $\lambda$ such that
\begin{equation*}
\beta_1 |f^{(k)}|_{L^2}^2 - \beta_2 |f^{(k-1)}|_{\infty}^2 \geqslant \lambda \int_{S^1} u f^2 dx.
\end{equation*}
This implies that the energy is $\lambda$-convex at $u$ because by (\ref{GGG}) for $\alpha=\alpha_{m,c}$ there exists $\lambda$ such that 
\begin{equation*}
|f^{(k)}|_{L^2}^2 -\alpha|f^{(k-1)}|_{\infty}^2  - \lambda |f|_{L^2}^2 \geqslant 0 ~~~~ \forall f \in C^{\infty}(S^1)
\end{equation*}
Hence we have proved the following theorem.

\begin{theorem}
The energies of the form
\begin{equation*}
E(u)=\begin{cases} \int_{S^1}|\partial_x^k u(x)|^2dx   &\mu =udx,~u\in H^k(S^1), \\
+\infty            &\text{else.}
\end{cases}
\end{equation*}
are restricted $\lambda$-convex on the positive measures with finite energy. In particular, periodic gradient flow solutions of
\begin{equation*}
\partial_t u = (-1)^k \partial_x (u \partial_x^{2k+1} u)
\end{equation*}
with positive initial data exist and are unique for a short time. 
\end{theorem}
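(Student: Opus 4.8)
The plan is to follow the template established for the Dirichlet energy in Section 3, reducing everything to a single interpolation inequality. First I would verify the three standing hypotheses of Section 2. Since finiteness of $|u|_{H^k}$ controls $|u|_{H^1}$, the proofs of Lemmas \ref{theo:Uniform con.}, \ref{lem:l.s.c} and \ref{lem smooth app.} carry over essentially verbatim: Wasserstein and uniform convergence coincide on energy sublevel sets, $E$ is narrowly lower semicontinuous, the sublevel sets are locally compact, $D(E)\subseteq\mathcal{P}_2^a(S^1)$, and it suffices to verify restricted $\lambda$-convexity along geodesics joining \emph{smooth positive} measures in $B_\delta(\mu)\cap E_c$.

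Next I would generalize Lemma \ref{lem interpolant regularity}. Given smooth $\mu_0,\mu_1$ with $E(\mu_i)<c$ and densities $>m$, the $C^{0,1/2}$ estimate — now applied via Sobolev embedding to $u_i,u_i^{(1)},\dots,u_i^{(k-1)}$ — yields a uniform $L^\infty$ bound $M=M(c)$ on $u_i$ and its lower derivatives. Differentiating the Monge–Ampère relation (\ref{Monge}) repeatedly gives bounds on $T',T'',\dots,T^{(k)}$ in terms of $m$ and $M$; substituting these into the interpolant formula $u_s((1-s)x+sT(x))=u_0(x)/((1-s)+sT'(x))$ and its derivatives produces constants $\hat m=\hat m(c,m)>0$, $\hat c=\hat c(c,m)<\infty$ and $\hat M=\hat M(c,m)$ such that, uniformly along the geodesic, $u_s>\hat m$, $E(u_s)<\hat c$, and $|u_s^{(i)}|_\infty<\hat M$ for $i<k$.

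Then I would compute the second variation exactly as in (\ref{SecondDerivative}): the change of variables $y=x+sf(x)$ together with the interpolant formula gives
\[
\left.\frac{d^2}{ds^2}\right|_{s=0}E(u_s)=\int_{S^1}|u\,f^{(k)}|^2\,dx+\int_{S^1}P\bigl(u,\dots,u^{(k)};f,\dots,f^{(k-1)}\bigr)\,dx,
\]
where $P$ is a polynomial of degree at most $2$ in each slot, every term carrying at most $k$ derivatives, and crucially with no $f^{(k)}$ and at most one factor carrying $k-1$ derivatives of $f$. Using $u>\hat m$ on the leading term, the bounds $|u^{(i)}|_\infty<\hat M$ for $i<k$, Young's inequality to split the cross terms, and the embedding $|f^{(i)}|_{L^2}\le|f^{(k-1)}|_\infty$ on $\R/\Z$ for $i<k-1$, I would absorb the remainder into a lower bound $\beta_1|f^{(k)}|_{L^2}^2-\beta_2|f^{(k-1)}|_\infty^2$ with $\beta_1,\beta_2>0$ depending only on $c$ and $m$. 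Since $W_2^2(\nu,\nu_1)=\int u f^2\,dx\le|f|_{L^2}^2$, $\lambda$-convexity along the geodesic reduces to $|f^{(k)}|_{L^2}^2-\alpha|f^{(k-1)}|_\infty^2-\lambda|f|_{L^2}^2\ge0$, which is precisely the generalized interpolation inequality (\ref{GGG}) obtained by iterating Lemma \ref{Maximum bound}; this produces $\hat\lambda=\hat\lambda_{c,m}$. The constant is uniform over all pairs of smooth positive measures in $B_\delta(\mu)\cap E_c$ — with $\delta=\delta_c$ chosen via Lemma \ref{theo:Uniform con.} so densities stay above some $m$ — so Lemma \ref{lem smooth app.} upgrades this to restricted $\hat\lambda$-convexity at $\mu$, and Theorem \ref{theorem1} gives the short-time existence and uniqueness of the gradient flow, i.e.\ of the positive periodic solution of $\partial_t u=(-1)^k\partial_x(u\partial_x^{2k+1}u)$.

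The main obstacle is the bookkeeping in the second-variation computation: one must confirm that \emph{every} monomial of the remainder $P$ — whose number grows with $k$ — can indeed be dominated, after completion of squares and Young's inequality, by a small multiple of $|f^{(k)}|_{L^2}^2$ plus a multiple of $|f^{(k-1)}|_\infty^2$, with all constants depending only on $c$ and $m$. The structural point that makes this succeed is the derivative-count bound on $P$ described above (no $f^{(k)}$, at most one factor with $k-1$ derivatives on $f$), so the worst term is only linear in $|f^{(k-1)}|$; verifying this bound on $P$ carefully is the crux, after which the estimate and the passage through (\ref{GGG}) are routine.
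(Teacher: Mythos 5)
Your proposal follows the paper's own argument essentially step for step: reduce to the Dirichlet-energy template (equivalence of Wasserstein and uniform convergence, lower semicontinuity, smooth approximation via the $H^1$ control), extend the interpolant bounds by repeatedly differentiating the Monge--Amp\`ere relation, write the second variation as a leading square term plus a polynomial remainder $P$ controlled by $\hat m$, $\hat M$ and the energy bound, and close with the inductively generalized interpolation inequality (\ref{GGG}) before invoking Lemma \ref{lem smooth app.} and Theorem \ref{theorem1}. This is the same approach as the paper, carried out at the same (sketch) level of detail, and your identification of the structure of $P$ as the crux matches where the paper itself is briefest.
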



\subsection{Different forms of equations}

Consider the energies of the form $E(u)=\int_{S^1} g(u,\partial_x u) dx$. We start by calculating the second derivative of the energy along a geodesic induced by a vector field $f\in C^{\infty}(S^1)$.

\begin{equation*}
\begin{aligned}
\dfrac{d^2}{ds^2} |_{s=0} E(u_s) &= \dfrac{d^2}{ds^2} |_{s=0} \int_{S^1} g(u_s(y),\partial_y u_s(y)) dy \\
&= \dfrac{d^2}{ds^2} |_{s=0} \int_{S^1}  g\left( \dfrac{u(x)}{1+sf'(x)},\dfrac{\partial x}{\partial y}\dfrac{\partial }{\partial x} (\dfrac{u(x)}{1+sf'(x)})\right) \frac{\partial y}{\partial x} dx\\
&= \dfrac{d^2}{ds^2} |_{s=0} \int_{S^1}  g\left( \dfrac{u(x)}{1+sf'(x)},\dfrac{1}{1+sf'(x)} \partial_x (\dfrac{u(x)}{1+sf'(x)})\right) (1+sf'(x)) dx
\end{aligned}
\end{equation*} 
where we used the change of variable $y=x+sf(x)$. Therefore we have
\begin{equation} \label{matrix}
\dfrac{d^2}{ds^2}\vert_{s=0}E(\mu_s)=\int_{s^1} \left[\begin{array}{ccc}
f' & f''
\end{array} \right] A
\left[
\begin{array}{c}
 f' \\
 f''
\end{array}
\right] dx
\end{equation}
where the matrix $A$ is given by
\begin{equation*}
A=\left[
\begin{array}{ccc}
2 u' g^{(0,1)}+4 u'^2 g^{(0,2)}+4 u u' g^{(1,1)}+u^2 g^{(2,0)} & \quad 2 u g^{(0,1)}+2 u u' g^{(0,2)}+u^2 g^{(1,1)} \\
2 u g^{(0,1)}+2 u u' g^{(0,2)}+u^2 g^{(1,1)} & u^2 g^{(0,2)}
\end{array}
\right]
\end{equation*}

Note that if $A$ is positive definite, then the energy is convex. We study the class of the form $g(u,\partial_x u)= |\partial_x (u^a)|^2 $ with $a>0$. Finiteness of the energy implies that $u^a$ is $C^{0,\frac{1}{2}}$ continuous with modulus of continuity smaller than the energy. Because $a>0$, $u$ is continuous and since $\int_{S^1}udx=1$, there exists a point $x_0$ with $u(x_0)=1$. Without loss of generality we assume $x_0=0$. We have
\begin{equation*}
|u(x)^a-1|\leqslant  \sqrt{c|x|} ~ \Rightarrow ~  u(x) \leqslant (1+\sqrt{c|x|})^{\frac{1}{a}}.
\end{equation*}  
Therefore there exists a uniform $M<\infty$ such that $u<M$ for all $u\in E_c$. We now briefly discuss the corresponding lemmas from Section 3.\\

\textbf{Equivalence of Wasserstein and uniform convergence on energy sub-level sets.} Let $u_2(x_0)-u_1(x_0)>h$. Then we have $u_2^a(x)\geqslant u_2^a(x_0)-\sqrt{c|x-x_0|}$ and $u_1^a(x) \leqslant u_1^a(x_0)+\sqrt{c|x-x_0|}$. Therefore the star-like shape in Lemma \ref{theo:Uniform con.} should be replaced by a modified version, given by $(u_2^a(x_0)-\sqrt{c|x-x_0|})^{\frac{1}{a}}$ and $(u_1^a(x_0)+\sqrt{c|x-x_0|})^{\frac{1}{a}}$, and the rest of the proof goes similarly. Hence, we have equivalence of the Wasserstein and uniform convergence on the energy sub-level sets.\\

\textbf{Lower semi-continuity and smooth approximation.} Having Lemma \ref{theo:Uniform con.} for this class of energies, the proof of Lemmas \ref{lem:l.s.c} and \ref{lem smooth app.} can be repeated by replacing $u$ with $u^a$. Hence the energy $E(u)=\int_{S^1}|\partial_x u^a|dx$ is lower semi continuous and one can use approximation by smooth functions.\\

\textbf{Energy of the interpolant.} Let $u$ be bounded away from zero $u>m>0$. When $a\geqslant 1$
\begin{equation}\label{s01}
m^{2(a-1)}\int_{S^1}|\partial_x u|^2dx ~~ \leqslant ~ E(u)=\int_{S^1}u^{2(a-1)}|\partial_x u|^2 dx ~~ \leqslant ~ M^{2(a-1)}\int_{S^1}|\partial_x u|^2dx
\end{equation}
and when $0<a<1$
\begin{equation}\label{s08}
M^{2(a-1)}\int_{S^1}|\partial_x u|^2dx ~~ \leqslant ~~ E(u)=\int_{S^1}u^{2(a-1)}|\partial_x u|^2 dx ~~ \leqslant ~~ m^{2(a-1)}\int_{S^1}|\partial_x u|^2dx.
\end{equation}
By equivalence of Wasserstein and uniform convergence, there exists $\delta$ such that $v>m$ for all $v\in B_{\delta}(u)\cap E_c$. Also we have proved that $v<M$ for all $v \in E_c$. Therefore we can refer to the calculation for the Dirichlet energy and just compare the energy of the geodesic with the corresponding Dirichlet energy using (\ref{s01}) and (\ref{s08}) to find a bound on the energy of interpolate points along a geodesic.\\

In conclusion, all of the required lemmas are true. By (\ref{matrix}), along a geodesic induced by a smooth vector field $f$ we have 
\begin{equation*}
\begin{aligned}
\dfrac{d^2}{ds^2} |_{s=0} E(u_s) &= \int_{S^1}2a^2u^{2(a-1)}\left( (uf'')^2 +4(1+a) (uf'')(u'f') + (1+a)(1+2a)(u'f')^2 \right) dx\\
&\geqslant \int_{S^1}\alpha_1 (u^a f'')^2 -\alpha_2((u^a)'f')^2  dx
\end{aligned}
\end{equation*}
for some constants $\alpha_1, \alpha_2$. Similar to (\ref{w02}), we have  
\begin{equation*}
\dfrac{d^2}{ds^2} |_{s=0} E(u_s) - m \lambda \int_{S^1}uf^2dx ~~ \geqslant ~~  |f''|^2_{L^2} -\alpha|f'|^2_{\infty} -\lambda |f|^2_{L^2} ~~ \geqslant ~~ 0
\end{equation*}
where the last inequality follows from Lemma \ref{Maximum bound}. We have proved the following theorem.

\begin{theorem}
For every $a>0$ 
\begin{equation*}
E(u)=\begin{cases} \int_{S^1}|\partial_x u(x)^a |^2dx   &\mu =udx,~u \in H^1(S^1) \\
+\infty            &\text{else.}
\end{cases}
\end{equation*}
is restricted $\lambda$-convex on positive measures with finite energy. In particular, periodic gradient flow solutions of
\begin{equation*}
\partial_t u = -2a \partial_x (u \partial_x(u^{a-1} \partial_x^2 u^a))
\end{equation*}
with positive initial data exist and are unique for a short time. 
\end{theorem}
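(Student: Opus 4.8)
The plan is to mirror the treatment of the Dirichlet energy in Section~3, substituting the auxiliary variable $w=u^{a}$ for the density $u$ wherever a Sobolev estimate is invoked, and then to invoke Theorem~\ref{theorem1}. Throughout we use $g(u,\partial_x u)=a^{2}u^{2(a-1)}(\partial_x u)^{2}$, so that $E(u)=\int_{S^{1}}(\partial_x w)^{2}\,dx$ with $w=u^{a}$. First I would transfer the structural lemmas. Finiteness of $E$ means $w=u^{a}\in H^{1}(S^{1})$, hence $w$ is $C^{0,1/2}$ with modulus of continuity bounded by $\sqrt{c}$; since $\int_{S^{1}}u\,dx=1$ and $a>0$ there is a point with $u=1$, which (because $t\mapsto t^{1/a}$ is continuous) gives a uniform bound $u<M(c)$ on $E_{c}$. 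Running the star-shaped-region argument of Lemma~\ref{theo:Uniform con.} on $u_{1}^{a},u_{2}^{a}$ rather than $u_{1},u_{2}$ yields equivalence of Wasserstein and uniform convergence on $E_{c}$ (with a possibly different exponent $\beta$); Lemmas~\ref{lem:l.s.c} and~\ref{lem smooth app.} then carry over with $u$ replaced by $u^{a}$, so $E$ is lower semicontinuous with locally compact sub-level sets, $D(E)\subseteq\mathcal{P}_{2}^{a}(S^{1})$, and it suffices to check restricted $\lambda$-convexity on smooth positive measures. Finally, on measures bounded away from $0$ the sandwich bounds (\ref{s01})--(\ref{s08}) compare $E$ with the Dirichlet energy, so the interpolant-regularity Lemma~\ref{lem interpolant regularity} transfers: geodesics between smooth densities $>m$ in $B_{\delta}(\mu)\cap E_{c}$ stay in $E_{\hat c}$ with density $>\hat m$, where $\hat c,\hat m$ depend only on $c,m,a$.

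Next I would carry out the second-variation estimate. Let $\nu=v\,dx$ be smooth with $v>m$ and $E(v)<c$, and let $f\in C^{\infty}(S^{1})$ generate the geodesic $\nu_{s}=(Id+sf)_{\#}\nu$. Differentiating $\int_{S^{1}}(\partial_y v_{s}^{a}(y))^{2}\,dy$ after the change of variables $y=x+sf(x)$, $v_{s}^{a}(x+sf(x))=v^{a}(x)/(1+sf'(x))$ --- i.e.\ applying (\ref{SecondDerivative}) to $v^{a}$ with the extra weight $u^{2(a-1)}$, equivalently the matrix form (\ref{matrix}) --- gives
\[
\frac{d^{2}}{ds^{2}}\Big|_{s=0}E(\nu_{s})=\int_{S^{1}}2a^{2}v^{2(a-1)}\bigl\{(vf'')^{2}+4(1+a)(vf'')(v'f')+(1+a)(1+2a)(v'f')^{2}\bigr\}\,dx.
\]
Completing the square exactly as for the Dirichlet energy bounds this below by $\int_{S^{1}}\{\alpha_{1}(v^{a}f'')^{2}-\alpha_{2}((v^{a})'f')^{2}\}\,dx$ for constants $\alpha_{1},\alpha_{2}>0$ depending only on $a$. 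Using $m<v<M$ to control the weight $v^{2(a-1)}$ on both sides, and the energy bound with (\ref{s01})--(\ref{s08}) to bound $\int_{S^{1}}((v^{a})')^{2}\,dx$, one obtains after absorbing constants into $\lambda$
\[
\frac{d^{2}}{ds^{2}}\Big|_{s=0}E(\nu_{s})-m\lambda\int_{S^{1}}vf^{2}\,dx\ \geq\ |f''|_{L^{2}}^{2}-\alpha|f'|_{\infty}^{2}-\lambda|f|_{L^{2}}^{2},
\]
with $\alpha=\alpha(c,m,a)$. Lemma~\ref{Maximum bound} then furnishes $\lambda=\lambda(c,m,a)<0$ making the right side $\geq0$; since $W_{2}^{2}(\nu,\nu_{1})=\int_{S^{1}}vf^{2}\,dx$, this is precisely the smooth $\lambda$-convexity criterion (\ref{smooth convexity}) along $\nu_{s}$.

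To conclude: by the first step there is $\delta$ with $v>m$ on $B_{\delta}(\mu)\cap E_{c}$; for smooth $\nu_{0},\nu_{1}$ there the interpolant bounds give $\hat c,\hat m$, and the previous step yields a \emph{single} $\hat\lambda=\hat\lambda(c,m,a)$ valid along every such geodesic. The analogue of Lemma~\ref{lem smooth app.} upgrades this to restricted $\hat\lambda$-convexity at $\mu$, so Theorem~\ref{theorem1} applies and gives short-time existence and uniqueness of the gradient flow from any positive $\mu$. The PDE follows from (\ref{v05}): with $w=u^{a}$,
\[
\frac{\delta E}{\delta u}=\frac{dw}{du}\,\frac{\delta}{\delta w}\!\int_{S^{1}}(\partial_x w)^{2}\,dx=-2a\,u^{a-1}\partial_x^{2}(u^{a}),
\]
hence $\partial_t u=\partial_x\bigl(u\,\partial_x\frac{\delta E}{\delta u}\bigr)=-2a\,\partial_x\bigl(u\,\partial_x(u^{a-1}\partial_x^{2}u^{a})\bigr)$.

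The main obstacle is the second-variation step when $0<a<1$: then $v^{2(a-1)}$ is a negative power, so the lower bound $v>m$ alone does not suffice and one must also use the uniform upper bound $v<M$ to keep the coefficients of $(v^{a}f'')^{2}$ and $((v^{a})'f')^{2}$ under control, while arranging the completion of squares so that the cross term is absorbed and only $|f''|_{L^{2}}$, $|f'|_{\infty}$ and $|f|_{L^{2}}$ survive --- exactly the combination Lemma~\ref{Maximum bound} is built to handle. The remaining estimates (the star-shaped-region comparison for $u^{a}$ and the interpolant bounds) are routine adaptations of Section~3.
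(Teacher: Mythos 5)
Your proposal is correct and follows essentially the same route as the paper: transfer the Section~3 lemmas with $u$ replaced by $u^{a}$ (using the uniform upper bound $u<M(c)$ from the mass normalization and the sandwich bounds comparing $E$ with the Dirichlet energy), compute the second variation via (\ref{matrix}) to get the same weighted quadratic form, and reduce to Lemma~\ref{Maximum bound}. Your added remarks on the $0<a<1$ case and the explicit first-variation derivation of the PDE are consistent with, and slightly more detailed than, the paper's brief treatment.
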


An interesting example is the \textbf{Fisher Information}
\[E(u)= \frac{1}{2} \int_{S^1}|\partial_x u(x)^{\frac{1}{2}} |^2dx\]
which corresponds to the \textbf{quantum drift diffusion Equation} 
\[\partial_t u =-\partial_x (u \partial_x \frac{\partial_x^2 \sqrt{u}}{\sqrt{u}}).\] 
Therefore we have local well-posedness of periodic solutions of the quantum drift diffusion equation with positive initial data.\\

Another interesting case is the limiting case $a=0$. The corresponding energy can be written as $E(u)=\frac{1}{2}\int_{S^1}|\partial_x \log u|^2 dx $. Finiteness of the energy result in $C^{0,\frac{1}{2}}$ continuity of $\log u$. All of the lemmas can be repeated in a similar fashion for this energy. Furthermore, finiteness of the energy implies a lower bound for the measure because
\begin{equation}
|\log u(x)- \log (1)|\leqslant \sqrt{c|x|} ~~\Longrightarrow ~~ e^{-\sqrt{c}} \leqslant u(x) \leqslant e^{\sqrt{c}}. 
\end{equation}
Therefore positivity is preserved along the flow. By (\ref{matrix}) we have
\begin{equation*}
\frac{d^2}{ds^2}|_{s=0}E(\mu_s)=\int_{S^1}2(f'')^2+4(f'')(\frac{u'}{u}f')-2(\frac{u'}{u}f')^2 dx.
\end{equation*}
By \ref{Maximum bound} there exists $\lambda$ such that
\begin{equation*}
\frac{d^2}{ds^2}|_{s=0}E(\mu_s) ~ \geqslant ~ |f''|^2_{L^2}-6 c e^{\sqrt{c}} |f'|^2_{\infty}- \lambda |f|_{L^2} ~ \geqslant ~ 0.
\end{equation*}
Hence $E$ is restricted $\lambda$ convex at $u \in E_c$. Furthermore, since there is a uniform lower bound $e^{-\sqrt{c}}$ for all $v\in E_c$, the constant $\lambda$ is uniformly bounded along the flow. Therefore, the gradient flow is globally well-posed and we have the following theorem.

\begin{theorem}
Wasserstein gradient flow of the energy 
\begin{equation*}
E(u)=\begin{cases} \frac{1}{2}\int_{S^1}|\partial_x \log u|^2 dx  &\mu =udx,~u \in H^1(S^1) \\
+\infty            &\text{else.}
\end{cases}
\end{equation*}
is globally well-posed. Hence the equation
\[\partial_t u =-\partial_x (u \partial_x^2 \frac{\partial_x u}{u^2} )\]
with periodic boundary condition is well-posed.
\end{theorem}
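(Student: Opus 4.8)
The plan is to verify that $E$ fits the framework of Theorem \ref{theorem1} at \emph{every} point of \emph{every} sub-level set $E_c=\{E<c\}$, with a restricted convexity modulus $\lambda=\lambda(c)$ that does not depend on the base point, and then to upgrade the resulting short-time existence to a global statement by combining energy dissipation with this uniformity. First I would record the regularity that finiteness of $E$ forces: if $\mu=u\,dx$ with $E(\mu)<c$ then $\log u\in H^1(S^1)$ with $|\partial_x\log u|_{L^2}^2<2c$, hence $\log u$ is $C^{0,1/2}$ with a modulus of continuity controlled by $c$; since $u$ is then continuous with $\int_{S^1}u\,dx=1$ there is a point where $u=1$, and integrating the H\"older bound from that point gives $e^{-\sqrt c}\le u(x)\le e^{\sqrt c}$ for all $x$, \emph{uniformly over $E_c$}. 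In particular $D(E)\subseteq\mathcal{P}_2^a(S^1)$ and $E\ge 0$; the $H^1$-bound on $\log u$ gives narrow --- hence, since $S^1$ is bounded, Wasserstein --- precompactness of sub-level sets; and, as indicated in Section~4, the analogues of Lemma \ref{theo:Uniform con.} (Wasserstein $\Leftrightarrow$ uniform convergence on $E_c$), Lemma \ref{lem:l.s.c} (lower semicontinuity), Lemma \ref{lem smooth app.} (it suffices to test restricted convexity on smooth positive densities) and Lemma \ref{lem interpolant regularity} (a geodesic joining two measures of $E_c$ stays in some $E_{\hat c}$, $\hat c=\hat c(c)$, with density still $\ge e^{-\sqrt c}$) all go through with $u$ replaced by $\log u$. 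This verifies the structural hypotheses of Theorem \ref{theorem1}.

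Next I would compute and bound the second variation. Specialising (\ref{matrix}) to $g(u,\partial_x u)=\tfrac12(\partial_x u/u)^2$ gives, along the geodesic generated by a smooth field $f$,
\[
\left.\frac{d^2}{ds^2}\right|_{s=0}E(\mu_s)=\int_{S^1}\Big\{2(f'')^2+4f''\,\frac{u'}{u}\,f'-2\big(\tfrac{u'}{u}\,f'\big)^2\Big\}\,dx .
\]
Applying Young's inequality to the cross term so as to retain a definite fraction of $\int(f'')^2$, estimating $\int(\tfrac{u'}{u}f')^2\le |f'|_\infty^2\int(\partial_x\log u)^2\le 2c\,|f'|_\infty^2$, and using $e^{-\sqrt c}\le u\le e^{\sqrt c}$ in $W_2^2(\mu,\mu_1)=\int u f^2\,dx$, one reduces the convexity inequality (\ref{smooth convexity}) at $\mu$ to an estimate of the form $|f''|_{L^2}^2-\alpha|f'|_\infty^2-\lambda|f|_{L^2}^2\ge 0$ valid for all $f\in C^\infty(S^1)$, with $\alpha=\alpha(c)$ and a suitable $\lambda=\lambda(c)<0$. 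Lemma \ref{Maximum bound} supplies exactly such a $\lambda$. Crucially, the density bounds and the interpolant bound $\hat c(c)$ are uniform over the \emph{whole} of $E_c$, so $\alpha$ and $\lambda$ do not depend on which pair of smooth measures in $E_c$ was chosen; by the analogue of Lemma \ref{lem smooth app.}, $E$ is therefore restricted $\lambda(c)$-convex at every $\mu\in E_c$.

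Finally I would pass from local to global. Theorem \ref{theorem1} gives, for any $\mu_0$ with $E(\mu_0)<c$, a unique gradient-flow trajectory on some interval $[0,\hat t\,]$, and the existence time $\hat t$ depends on the data only through $c$ and the (base-point-independent) restricted-convexity data at level $c$. Since a gradient flow dissipates energy, $E(\mu_t)\le E(\mu_0)<c$ for all $t$: the trajectory never leaves $E_c$, its density stays in $[e^{-\sqrt c},e^{\sqrt c}]$, and the same $\hat t$ and $\lambda(c)$ apply when the flow is restarted at $\mu_{\hat t}$. Concatenating these pieces and invoking local uniqueness at each step produces a unique trajectory on $[0,\infty)$ --- the asserted global well-posedness --- with the solution strictly positive for all time. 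The identification of this flow with the displayed PDE, under periodic boundary conditions, is the first-variation computation around (\ref{v05}), carried out exactly as for the thin-film and quantum drift diffusion equations treated above.

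The only step that does more than reassemble the Section~3 machinery is the second one, and even there the decisive ingredient is not the interpolation inequality (Lemma \ref{Maximum bound}) but the observation that, for this particular energy, finiteness of $E$ forces the two-sided pointwise bound $e^{-\sqrt c}\le u\le e^{\sqrt c}$, which is preserved by the flow. This single fact does double duty: it keeps solutions strictly positive, so the flow remains in the region where the Otto geometry is well behaved, and it prevents the coefficient $\alpha(c)$ --- hence $\lambda(c)$, hence $\hat t$ --- from degenerating along the trajectory. For the Dirichlet energy or the powers $u^a$ with $a>0$ no such uniform lower bound on the density is available and one obtains only local well-posedness; here it is precisely this bound that turns the local theorem into a global one.
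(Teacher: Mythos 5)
Your proposal is correct and follows essentially the same route as the paper: the key observation in both is that finiteness of $E$ forces the uniform two-sided bound $e^{-\sqrt c}\le u\le e^{\sqrt c}$ on all of $E_c$, which makes the Section~3 lemmas and the second-variation estimate via Lemma \ref{Maximum bound} hold with constants depending only on $c$, so that restricted $\lambda(c)$-convexity holds uniformly on the sub-level set and energy dissipation lets the local result be iterated globally. Your write-up is in fact somewhat more explicit than the paper's about why the continuation argument does not degenerate, but it is the same proof.
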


\textbf{Remarks.} There are some simple and some more challenging directions to extend the developed method to other classes of equations. As  a simple application, one can construct other classes of restricted $\lambda$-convex functionals by combining the ones already studied. For example, the solution of the energy $E(u)=\int_{S^1} \lbrace |\partial_x u|^2 + \epsilon \frac{1}{u^2} \rbrace dx$, which is the Dirichlet energy with a perturbation, is globally well-posed. The reason is that the second term forces the energy to remain positive. One interesting problem is the analysis of equations in higher dimensions. Our method is utilizing Sobolev embedding theorem on energy sub-level sets which is getting weaker on higher dimensions. An interesting question is whether it is possible to solve this problem with studying higher order energies.\\

\textbf{Acknowledgements.} I wish to express my gratitude to my supervisor Almut Burchard for her guidance and support throughout this project. Her generosity with her energy and time will not be forgotten. I am also grateful to Robert McCann, Dejan Slep{\v{c}}ev, and Nicola Gigli for all the insightful discussions.\\


\bibliographystyle{plain}
\bibliography{gradient}

\end{document}